\pgfplotsset{compat=1.7}
\DeclareMathOperator{\arcsinh}{arcsinh}
\def\ball{{I\kern -.35em B}}
\def\tto{\rightrightarrows}
\def\bx{\bar x}
\def\by{\bar y}
\def\nns{neighborhoods\,}
\def\gph{\mathop{\rm gph}\nolimits}
\def\dist{\mathop{\rm dist}\nolimits}
\newtheorem{proof}{Proof}
\newtheorem{definition}{Definition}[section]
\newtheorem{proposition}{Proposition}[section]
\newtheorem{theorem}{Theorem}[section]
\newtheorem{lemma}{Lemma}[section]
\newtheorem{remark}{Remark}[section]
\newtheorem{example1}{Example}[section]
\newdimen\LineSpace
\tikzset{
	line space/.code={\LineSpace=#1},
	line space=5pt
}
\begin{document}

\title{On Josephy-Halley method for generalized equations}
\author{Tomáš Roubal\thanks{Institute of Information Theory and Automation, Czech Academy of Sciences, Prague, Czech Republic, 	 roubal@utia.cas.cz, ORCID ID: 0000-0002-6137-1046}
	\and 
	    Jan Valdman\thanks{Institute of Information Theory and Automation, Czech Academy of Sciences, Prague, Czech Republic,	 jan.valdman@utia.cas.cz, ORCID ID: 0000-0002-6081-5362}
	    \thanks{Department of Mathematics, Faculty of Science, University of South Bohemia, Brani\v{s}ovsk\'a 31, \v{C}esk\'e Bud\v{e}jovice, Czech Republic}
	    }
\date{}
\maketitle

\begin{abstract}
We extend the classical third-order Halley iteration to the setting of generalized equations of the form
\[
0 \in f(x) + F(x),
\]
where \(f\colon X\longrightarrow Y\) is twice continuously Fréchet-differentiable on Banach spaces and \(F\colon X\tto Y\) is a set-valued mapping with closed graph.  Building on predictor–corrector framework, our scheme first solves a partially linearized inclusion to produce a predictor \(u_{k+1}\), then incorporates second-order information in a Halley-type corrector step to obtain \(x_{k+1}\).  Under metric regularity of the linearization at a reference solution and Hölder continuity of \(f''\), we prove that the iterates converge locally with order \(2+p\) (cubically when \(p=1\)).  Moreover, by constructing a suitable scalar majorant function we derive semilocal Kantorovich-type conditions guaranteeing well-definedness and R-cubic convergence from an explicit neighbourhood of the initial guess.  Numerical experiments—including one- and two-dimensional test problems confirm the theoretical convergence rates and illustrate the efficiency of the Josephy–Halley method compared to its Josephy–Newton counterpart.

\end{abstract}
\noindent\textbf{Keywords.} Local convergence, Halley method, Kantorovich theorem, cubic convergence, generalized equation, 

\noindent\textbf{MSC subject classifications.} 49J53, 65K15, 90C33 \\
\section{Introduction}

Halley method is a third-order iterative technique for solving nonlinear equations, offering faster convergence compared to classical Newton method. Originally introduced by Edmond Halley in 1694, this method refines the Newton approach by incorporating second-order derivative, which allows it to achieve cubic convergence under suitable conditions. The method is particularly useful when high accuracy is required, and it is applicable to both scalar equations and systems of nonlinear equations. In the context of Banach spaces and operator equations, Halley method has been extensively studied, e.g., in \cite{CQ1993, EH2007,LX2014, CR1985, HS1999, Yamamoto1988}, with convergence analyses extending from classical Kantorovich-type conditions to more modern approaches involving majorant functions \cite{CQ1993, SA2015,LX2014}, and logarithmic convexity indices\cite{HS1999}. Due to its cubic convergence rate, Halley method is a powerful tool in numerical analysis, especially for problems where evaluating higher-order derivatives is computationally feasible.
 Consider a single-valued mapping $f$ between Banach spaces $X$ and $Y$.
 We assume that $ f $ is continuously Fréchet differentiable and that there exists a solution $\bx \in X$ such that
\begin{eqnarray}
\label{eqEquality}
 f(\bx) = 0,
\end{eqnarray}
 with the Fréchet derivative $ f'(x) $.
 
 For a point \( x \) close to \( \bx \) and for a small perturbation \( h \in X \), the Taylor expansion of \( f \) at \( x \) is given by
$$
 f(x+h) = f(x) + f'(x)h + o(\|h\|).
$$
 To find a correction \( h \) such that \( f(x+h) = 0 \), we neglect the higher-order term and set
$$
 f(x) + f'(x)h \approx 0.
$$
 This leads to the Newton iteration
$$
 f(x_k) + f'(x_k)(x_{k+1}-x_k) = 0,\quad \text{for}\quad k=0,1,2,\dots
$$
 
 Halley method improves upon Newton method by incorporating second-order derivative information, which under suitable conditions leads to cubic convergence.
 
 Assume now that \( f \) is twice continuously Fréchet differentiable. For a perturbation \( h \in X \), the Taylor expansion up to second order is
 $$
 f(x+h) = f(x) + f'(x)h + \frac{1}{2} f''(x)(h)(h) + o(\|h\|^2),
 $$
 with the second Frechet derivative $f''(x)$.
 
 To find a correction \( h \) such that \( f(x+h) = 0 \), we neglect the higher-order term and set 
 \[
 f(x) + f'(x)h + \frac{1}{2} f''(x)(h)(h) \approx 0.
 \]
 A combination of the Newton iteration and this second-order correction leads to the Halley iteration scheme
 \begin{eqnarray}
 	\label{eqIterationHalleySingle1}
 	\left\lbrace	\begin{aligned}
 		&f(x_k)+f'(x_k)(u_{k+1}-x_k)=0,\\[1mm]
 		&f(x_k) + \Bigl(f'(x_k) + \tfrac{1}{2} f''(x_k)(u_{k+1}-x_k)\Bigr)(x_{k+1}-x_k)=0,
 	\end{aligned}
 	\right.
 	\quad \text{for}\quad k=0,1,2,\dots
 \end{eqnarray}
 Under certain regularity assumptions, for each \( k=0,1,2,\dots \), we obtain the iteration schema
 \begin{eqnarray}
 	\label{eqIterationHalleySingle2}
 	\left\lbrace	\begin{aligned}
 		u_{k+1}&=x_k-f'(x_k)^{-1} f(x_k),\\[1mm]
 		x_{k+1} &= x_k-\Bigl(f'(x_k) + \tfrac{1}{2} f''(x_k)(u_{k+1}-x_k)\Bigr)^{-1}f(x_k),
 	\end{aligned}
 	\right.
 \end{eqnarray}
 or equivalently,
 \begin{eqnarray*}
 	\label{eqIterationHalleySingle3}
 	\left\lbrace	\begin{aligned}
 		u_{k+1}&=x_k-f'(x_k)^{-1} f(x_k),\\[1mm]
 		x_{k+1} &= x_k-\Bigl(I_Y + \tfrac{1}{2} f'(x_k)^{-1}f''(x_k)(u_{k+1}-x_k)\Bigr)^{-1} f'(x_k)^{-1}f(x_k),
 	\end{aligned}
 	\right.
 \end{eqnarray*}
 where $I_Y$ is identity mapping in $Y$.
This approach has been extended for different types of approximations, e.g., \cite{EH2007}. Under certain assumptions, it provides third order of convergence to a solution of \eqref{eqEquality}. A sequence $(x_k)$ provides \emph{$p$-th order of convergence} to $\bx$ if $x_k \to \bx$ and
\begin{eqnarray*}
	\lim_{k\to\infty} \dfrac{\Vert x_{k+1}-\bx\Vert}{\Vert x_{k}-\bx\Vert^p}=L,
\end{eqnarray*}
where $L>0$.

In our approach, we extend the classical Halley method to the setting of \emph{generalized equations}. These problems consist in finding a point $x \in X$ such that 
\begin{eqnarray}
	 \label{eqGeneralizedEquation}
	  0 \in f(x) + F(x),
\end{eqnarray}
  where $f : X \longrightarrow Y$ is a single-valued and $F : X \rightrightarrows Y$ is a set-valued mapping, often representing a normal cone or a subdifferential in variational analysis.
  
   Generalized equations arise naturally in optimization, equilibrium problems, and variational inequalities, making the development of efficient and robust numerical methods for their solution a subject of significant interest. The focus of our work is to analyse the convergence properties of Halley method in this generalized framework.

 In 1979, N. H. Josephy introduced an iterative method in the spirit of Newton method for solving generalized equations \cite{Josephy1979}. The approach is based on a partial linearization of the generalized equation, following the ideas later formalized in \cite{Robinson1980}. The method is described by the following iterative scheme
 \begin{eqnarray} 
 	\label{eqIterationJosephyNewton}
 	 0 \in f(x_k) + f'(x_k)({x}_{k+1} - x_k) + F({x}_{k+1}),\quad \text{for}\quad k = 0, 1, 2, \dots 
 \end{eqnarray}
This method, along with its generalizations to the case of nonsmooth $f$, has been extensively studied and developed over the last several decades, e.g., \cite{Bonnans1994, DR2009, IKS2013,CDPVR2018}.

Our aim in this work is to analyse the convergence of an iterative scheme that blends the third‐order accuracy of Halley’s method \eqref{eqIterationHalleySingle1} with the generalized‐equation framework of Josephy–Newton \eqref{eqIterationJosephyNewton}. Let \(X\) and \(Y\) be Banach spaces, \(f\colon X\longrightarrow Y\) a twice  continuously Fréchet differentiable single‐valued mapping, and \(F\colon X\tto Y\) a set‐valued mapping with closed graph. Starting from an initial guess \(x_0\), each iteration proceeds by first solving the partially linearized generalized equation
\begin{eqnarray*}
  0 \in f(x_k)+ f'(x_k)\,(u_{k+1}-x_k)+F(u_{k+1}), \quad\text{for}\quad  k=0,1,2,\dots,
\end{eqnarray*}
to obtain a preliminary update \(u_{k+1}\). We then incorporate second‐order information by solving
\[
  0 \in f(x_k)+\Bigl(f'(x_k)+\tfrac12\,f''(x_k)(u_{k+1}-x_k)\Bigr)\,(x_{k+1}-x_k)+F(x_{k+1}),
\]
yielding the next iterate \(x_{k+1}\). In compact form, the full scheme reads
\begin{equation}
  \label{eqIterationHalley}
  \left\lbrace\begin{aligned}
    0&\in f(x_k)+ f'(x_k)\bigl(u_{k+1}-x_k\bigr)+F\bigl(u_{k+1}\bigr),\\
    0&\in f(x_k)+\Bigl(f'(x_k)+\tfrac12\,f''(x_k)\,(u_{k+1}-x_k)\Bigr)
             \bigl(x_{k+1}-x_k\bigr)+F\bigl(x_{k+1}\bigr),
  \end{aligned}\right.
  \quad\text{for}\quad k=0,1,2,\dots.
\end{equation}
Here \(f'(x_k)\) and \(f''(x_k)\) denote the first and second Fréchet derivatives of \(f\), respectively. We refer to this iteration as the {\em Josephy–Halley method}.

The first inclusion corresponds to the Josephy–Newton predictor step, while the second implements the Halley‐type corrector. Under suitable smoothness and metric regularity assumptions on the pair \((f,F)\), we shall prove that this combined method retains the cubic convergence rate characteristic of classical Halley’s method, even in the presence of the set-valued term \(F\).  

The paper is organized as follows.  In Section 2, we first fix our notation and recall fundamental concepts from variational analysis, including set-valued mappings, metric regularity, Fréchet derivatives, and generalized equations, which will be used throughout the paper. We also present key stability results for metric regularity under Lipschitz perturbations.

In Section~3, we turn to the local theory. Under a metric regularity assumption at a given solution of the generalized equation, we prove that the Josephy–Halley iteration converges cubically in a neighborhood of the root. In particular, we show that if the single‐valued part is twice continuously Fréchet differentiable with H\"older‐continuous second derivative, then the error satisfies
\[
  \|x_{k+1}-\bar x\|\le L\,\|x_k-\bar x\|^{2+p}
\]
for some \(p\in(0,1]\) and constant \(L>0\).

Section~4 is devoted to semilocal convergence in the spirit of  Kantorovich theorem. By introducing a suitable scalar majorant function \(h(t)\) and comparing the vector iterates to its roots, we derive explicit computable conditions on the initial guess under which the full Josephy–Halley scheme remains well‐defined and converges $R$‐cubically to a solution. We provide formulas for the radius of the convergence region and error bounds in terms of the majorant’s parameters.

Finally, in Section~5 we describe our MATLAB implementation of the Josephy–Halley method (using variable‐precision arithmetic for high accuracy) and present a suite of one‐ and two‐dimensional test problems. We tabulate observed convergence orders, compare iteration counts and runtimes against the Josephy–Newton method, and illustrate on a grid of starting points how the two schemes perform in practice.

\section{Preliminaries}
We denote Banach spaces by $(X, \Vert \cdot \Vert)$ and $(Y, \Vert \cdot \Vert)$. In these spaces, the closed ball and the open ball of radius $\delta > 0$ centered at a point $x \in X$ are defined by
\[
\ball_X[x, \delta] := \{u \in X : \Vert x - u \Vert \leq \delta \} \quad\text{and}\quad \ball_X(x, \delta) := \{u \in X : \Vert x - u \Vert < \delta \}.
\]

The distance from a point $x \in X$ to a  set $A \subseteq X$ is denoted by $\mathrm{dist}(x, A)$ and is defined as the shortest distance between $x$ and any point in $A$:
\[
\mathrm{dist}(x, A) := \inf_{u \in A} \Vert u - x \Vert.
\]
Note that $\mathrm{dist}(x, \emptyset)=\infty$.

We denote by $f'(x)$ the first Fréchet derivative of a mapping $f : X \longrightarrow Y$ at a point $x \in X$, and by $f''(x)$ the second Fréchet derivative of $f$ at $x \in X$. The first derivative $f'(x)$ is a bounded linear operator from $X$ to $Y$, while the second derivative $f''(x)$ is a bounded bilinear mapping from the Cartesian product $X \times X$ to $Y$. In particular, for any $h_1, h_2 \in X$, we have
\[
f''(x)(h_1) (h_2) \in Y.
\]

The set-valued mapping $F : X \rightrightarrows Y$ is given by its graph $\mathrm{gph}\, F\subset X\times Y$ such that $y\in F(x)$ if and only if $(x,y)\in \gph\,F$. The domain of $F$, denoted by $\mathrm{dom}\, F$, is the set of all points $x \in X$ for which $F(x) \neq \emptyset$. The inverse of $F$, denoted by $F^{-1}$, is defined as
\[
F^{-1}(y) := \{x \in X \mid y \in F(x) \}, \quad y \in Y.
\]

The $n$-dimensional Euclidean space is denoted by $\mathbb{R}^n$, with the Euclidean norm $\|x\|$ for $x \in \mathbb{R}^n$.



The regularity properties of mappings in the context of studying the convergence of the Josephy-Newton method were first employed by Bonnans in \cite{Bonnans1994}. We adopt this approach in our analysis. The concept of metric regularity has been extensively developed over the past decades and is now deeply embedded in the literature, e.g., the monograph by Dontchev and Rockafellar \cite{DR2014} for a comprehensive treatment and further references. Its definition follows.

\begin{definition}\label{defRegularity}Let $(X,\Vert \cdot\Vert)$ and $(Y,\Vert \cdot\Vert)$ be Banach spaces. 
	Let $F : X \rightrightarrows Y$ be a set-valued mapping and let $(\bar{x}, \bar{y}) \in \mathrm{gph}\, F$
	be a given point. We say that $F$ is \textit{metrically regular} around $(\bar{x}, \bar{y})$ if there is $\kappa \geq 0$ together with \nns $U$ of $\bar{x}$ and $V$ of $\bar{y}$ such that
		$$
		\mathrm{dist}(x, F^{-1}(y)) \leq \kappa\, \mathrm{dist}(y, F(x)) \quad \text{for each}\quad (x, y) \in U \times V.
		$$
\end{definition}
In our analysis, we need the following statement, which guarantees the qualitative stability of  metric regularity around the reference point under a single-valued Lipschitz continuous perturbation. This statement is taken from \cite[Theorem 2.3]{CPR2019}.
\begin{theorem}\label{thmSumStability} Let $(X, \| \cdot \|)$ and $(Y, \| \cdot \|)$ be Banach spaces, let $G : X \rightrightarrows Y$ be a set-valued mapping, and $(\bar{x}, \bar{y}) \in X \times Y$. Assume that there are positive constants $a$, $b$, and $\kappa$ such that the set $\gph \, G \cap (\ball_X[\bx, a] \times\ball_Y[\by, b])$ is closed in $X \times Y$ and $G$ is metrically regular on $\ball_X[\bx, a]$ for $\ball_Y[\by, b]$ with the constant $\kappa$. Let $\mu > 0$ be such that $\kappa \mu < 1$ and let $\kappa' > \kappa / (1 - \kappa \mu)$. Then for every positive $\alpha$ and $\beta$ such that
\begin{eqnarray}
	\label{eqConstantIneq}
2\kappa' \beta + \alpha \leq a \quad \text{and} \quad \mu (2\kappa' \beta + \alpha) + 2\beta \leq b
\end{eqnarray}
and for every mapping $g : X \longrightarrow Y$ satisfying
\[
\|g(\bar{x})\| \leq \beta \quad \text{and} \quad \|g(x) - g(x')\| \leq \mu \|x - u\| \text{ for every } x, x
u \in \ball_X [\bar{x}, {2\kappa' \beta + \alpha}],
\]
the mapping $g + G$ has the following property: for every $y,v \in \ball_Y[\by,\beta]$ and every $x \in (g + G)^{-1}(y) \cap \ball_X[\bar{x},\alpha]$ there exists a point $u \in \ball_X[\bx, 2\kappa' \beta + \alpha]$ such that
\[
v \in g(u) + G(u) \quad \text{and} \quad \|x - u\| \leq \kappa' \|y - v\|.
\]
\end{theorem}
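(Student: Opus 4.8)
\medskip\noindent\emph{Sketch of the intended proof.}
The plan is to produce the point $u$ by a Lyusternik--Graves-type iteration, in the spirit of the classical proof of the perturbed metric regularity theorem (see, e.g., \cite{DR2014}), with step sizes contracting at a rate governed by $\kappa\mu<1$. The degenerate case $v=y$ is settled by $u:=x$, so assume $v\neq y$. First I would fix an auxiliary constant $\lambda>1$, close enough to $1$, so that $\lambda\kappa\mu<1$ and $\frac{\lambda\kappa}{1-\lambda\kappa\mu}\le\kappa'$; this is possible because the map $\lambda\mapsto\frac{\lambda\kappa}{1-\lambda\kappa\mu}$ is continuous and increasing on $(0,1/(\kappa\mu))$ and takes the value $\frac{\kappa}{1-\kappa\mu}<\kappa'$ at $\lambda=1$. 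The role of $\lambda$ is to provide the slack needed to select points from sets defined by an infimum.

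Starting from $x_0:=x$, I would recursively construct a sequence $(x_n)_{n\ge0}$ with $v-g(x_{n-1})\in G(x_n)$ for every $n\ge1$, with $\|x_{n+1}-x_n\|\le\lambda\kappa\mu\|x_n-x_{n-1}\|$ for every $n\ge1$, and with $\|x_1-x_0\|\le\lambda\kappa\|v-y\|$. The construction uses metric regularity applied at the pair $(x_n,\,v-g(x_n))$: since $v-g(x_{n-1})\in G(x_n)$, we have $\dist\big(v-g(x_n),G(x_n)\big)\le\|g(x_n)-g(x_{n-1})\|\le\mu\|x_n-x_{n-1}\|$, hence $\dist\big(x_n,G^{-1}(v-g(x_n))\big)\le\kappa\mu\|x_n-x_{n-1}\|$, which in particular forces $G^{-1}(v-g(x_n))\neq\emptyset$, so one can pick $x_{n+1}$ in this preimage respecting the stated bound. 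The base step is analogous, using $y-g(x_0)\in G(x_0)$ to obtain $\dist(x_0,G^{-1}(v-g(x_0)))\le\kappa\|v-y\|$.

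The crux of the argument --- and the only place where both inequalities in \eqref{eqConstantIneq} are used --- is to verify by induction that every pair to which metric regularity is applied (and, in the limiting step below, the closedness hypothesis) indeed lies in $\ball_X[\bx,a]\times\ball_Y[\by,b]$. Summing the geometric bound and using $\|v-y\|\le2\beta$ gives $\|x_n-\bx\|\le\alpha+\frac{\lambda\kappa}{1-\lambda\kappa\mu}\|v-y\|\le\alpha+2\kappa'\beta\le a$, so all iterates stay in $\ball_X[\bx,2\kappa'\beta+\alpha]$, which is where the Lipschitz estimate for $g$ is valid; consequently $\|v-g(x_n)-\by\|\le\|v-\by\|+\|g(\bx)\|+\mu\|x_n-\bx\|\le 2\beta+\mu(2\kappa'\beta+\alpha)\le b$, using $\|g(\bx)\|\le\beta$ and the second inequality of \eqref{eqConstantIneq}. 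The same geometric estimate shows that $(x_n)$ is Cauchy, that its limit $u$ satisfies $\|x-u\|\le\sum_{n\ge0}\|x_{n+1}-x_n\|\le\frac{\lambda\kappa}{1-\lambda\kappa\mu}\|v-y\|\le\kappa'\|y-v\|$, and that $u\in\ball_X[\bx,2\kappa'\beta+\alpha]$.

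Finally, to conclude $v\in g(u)+G(u)$, I would pass to the limit in the inclusions $(x_n,\,v-g(x_{n-1}))\in\gph G$: these pairs belong to $\gph G\cap(\ball_X[\bx,a]\times\ball_Y[\by,b])$ by the bounds just established, this set is closed by hypothesis, $x_n\to u$, and $v-g(x_{n-1})\to v-g(u)$ by continuity of $g$; hence $v-g(u)\in G(u)$. The main obstacle will not be any single deep step but rather the careful, simultaneous propagation of the two ball-membership invariants through the induction, so that both metric regularity and the closedness assumption remain applicable at every iterate --- which is precisely what the choice of $\lambda$ and the exact form of \eqref{eqConstantIneq} are engineered to guarantee.
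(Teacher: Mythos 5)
Your sketch is correct: the iteration $x_0:=x$, $v-g(x_{n-1})\in G(x_n)$ with the slack factor $\lambda>1$, the two ball-membership invariants driven by \eqref{eqConstantIneq}, and the closedness of $\gph G\cap(\ball_X[\bx,a]\times\ball_Y[\by,b])$ in the limiting step is precisely the standard Lyusternik--Graves-type argument by which this perturbation result is established in the cited source \cite{CPR2019} (the paper itself gives no proof, only the citation). The only detail worth making explicit is the degenerate step $x_n=x_{n-1}$, where one takes $x_{n+1}:=x_n$ directly rather than selecting from a set at distance zero; with that remark the argument is complete.
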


We also need the following fixed point theorem for set-valued mappings, see \cite[Theorem 5E.2]{DR2014}.
\begin{theorem}\label{thmFixedPoint} Let $(X, \Vert\cdot\Vert)$ be a Banach space. Consider a set-valued mapping $\Phi : X \rightrightarrows X$ and a point $\bar{x} \in X$. Suppose that there exist scalars $a > 0$ and $\lambda \in (0,1)$ such that the set
$
\gph\,\Phi \cap (\ball_X[\bar{x},a] \times \ball_X[\bar{x},a])
$
is closed and
\begin{itemize}
    \item[\rm (a)] $\dist(\bx, \Phi(\bar{x})) < a(1 - \lambda)$;
    \item[\rm (b)] for each $z\in \Phi(x) \cap \ball_X[\bar{x}, a]$ we have
    $\dist(w, \Phi(v)) \leq \lambda \Vert x-v\Vert \quad \text{for all } x, v \in \ball_X[\bar{x},a]$.
\end{itemize}
Then $\Phi$ has a fixed point in $\ball_X[\bar{x},a]$; that is, there exists $x \in \ball_X[\bar{x}, a]$ such that $x \in \Phi(x)$.
\end{theorem}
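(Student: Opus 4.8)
The plan is to prove Theorem~\ref{thmFixedPoint} by the classical successive–approximation (Nadler–type) argument for set-valued contractions: starting from $x_0:=\bar x$, I would construct a sequence $(x_n)$ with $x_{n+1}\in\Phi(x_n)$ that never leaves $\ball_X[\bar x,a]$, show that it is Cauchy, and then identify its limit as a fixed point of $\Phi$ using the closedness of $\gph\,\Phi\cap(\ball_X[\bar x,a]\times\ball_X[\bar x,a])$. Only hypotheses (a), (b) and completeness of $X$ are needed; no metric regularity of $\Phi$ itself enters.

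First I would initialize. By hypothesis (a), $\dist(x_0,\Phi(x_0))<a(1-\lambda)$, so there is a point $x_1\in\Phi(x_0)$ with $\|x_1-x_0\|<a(1-\lambda)$; fix a number $c$ with $\|x_1-x_0\|\le c<a(1-\lambda)$, and note $x_1\in\ball_X[\bar x,a]$. Then I would run an induction: assume $x_0,\dots,x_n$ ($n\ge1$) are defined with $x_j\in\Phi(x_{j-1})$, $x_j\in\ball_X[\bar x,a]$ and $\|x_j-x_{j-1}\|$ decaying geometrically, for $1\le j\le n$. Applying condition (b) with the point $x_n\in\Phi(x_{n-1})\cap\ball_X[\bar x,a]$ and the pair $x_{n-1},x_n\in\ball_X[\bar x,a]$ gives $\dist(x_n,\Phi(x_n))\le\lambda\|x_n-x_{n-1}\|$, so I can pick $x_{n+1}\in\Phi(x_n)$ that almost realizes this distance, and a geometric-series estimate yields
\[
\|x_{n+1}-\bar x\|\le\sum_{j=1}^{n+1}\|x_j-x_{j-1}\|\le\frac{c}{1-\lambda}<a,
\]
so $x_{n+1}\in\ball_X[\bar x,a]$ and the induction continues, with the increments $\|x_{n+1}-x_n\|$ controlled by $(c+n\varepsilon)\lambda^n$ (see below for the slack $\varepsilon$).

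Finally I would pass to the limit. The bound on the increments makes $(x_n)$ Cauchy; since $X$ is Banach it converges to some $x^\star$, and the displayed estimate gives $\|x^\star-\bar x\|\le c/(1-\lambda)\le a$, i.e. $x^\star\in\ball_X[\bar x,a]$. For every $n$ the pair $(x_n,x_{n+1})$ lies in $\gph\,\Phi\cap(\ball_X[\bar x,a]\times\ball_X[\bar x,a])$, and $(x_n,x_{n+1})\to(x^\star,x^\star)$; closedness of that set then forces $(x^\star,x^\star)\in\gph\,\Phi$, i.e. $x^\star\in\Phi(x^\star)$, which is the desired fixed point in $\ball_X[\bar x,a]$.

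The one point I would treat most carefully—the only genuine subtlety—is that $\dist(x_n,\Phi(x_n))$ need not be attained, so $x_{n+1}$ can only be chosen to \emph{almost} realize it. I would absorb this by selecting the near-minimizers with a geometrically summable slack $\eta_n=\varepsilon\lambda^n$, which propagates to the bound $\|x_{n+1}-x_n\|\le(c+n\varepsilon)\lambda^n$ and hence $\sum_n\|x_{n+1}-x_n\|\le c/(1-\lambda)+\varepsilon\lambda/(1-\lambda)^2$; choosing $\varepsilon$ small enough that this stays below $a$ (possible since $c<a(1-\lambda)$) keeps every iterate inside $\ball_X[\bar x,a]$ and preserves the Cauchy property. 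Everything else is routine bookkeeping with the geometric series.
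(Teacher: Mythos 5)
Your proof is correct. Note that the paper does not prove this statement at all — it is quoted verbatim (with a typo: the $w$ in condition (b) should be the $z$ introduced just before it, and you read it that way) from Dontchev–Rockafellar, Theorem 5E.2, so there is no in-paper proof to compare against; your successive-approximation argument is essentially the standard proof of that result. The one genuine subtlety, that $\dist(x_n,\Phi(x_n))$ need not be attained, is handled soundly by your summable slack $\eta_n=\varepsilon\lambda^n$; a marginally cleaner bookkeeping is to propagate the strict inequality from (a) directly and select $x_{n+1}\in\Phi(x_n)$ with $\Vert x_{n+1}-x_n\Vert<\lambda^{n}a(1-\lambda)$ at every step, which gives $\sum_n\Vert x_{n+1}-x_n\Vert<a$ without the $(c+n\varepsilon)\lambda^n$ terms, but your version closes the argument just as well.
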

\section{Local convergence of Josephy-Halley method}
In this section, we focus on the local convergence of the iterative scheme \eqref{eqIterationHalley}. We show that, under the assumption of metric regularity and H\"older continuity of the second derivative of $f$, the sequence generated by the iteration scheme converges to a solution of the generalized equation with the convergence rate $2 + p$. 

The mapping $f : X \longrightarrow Y$ is said to be H\"older continuous of order $p \in (0, 1]$ around a point $\bar{x} \in X$, if there exist constants $\ell > 0$ and $r > 0$ such that
\[
\Vert f(x) - f(u) \Vert \leq \ell \Vert x - u \Vert^p \quad \text{for all } x, u \in \ball_X[\bar{x}, r].
\]
Note that if $p = 1$, this condition reduces to Lipschitz continuity of $f$ around $\bar{x}$

\begin{theorem}\label{thmHalleyLocal}Let $(X,\Vert \cdot\Vert)$ and $(Y,\Vert \cdot\Vert)$ be Banach spaces. Consider a single-valued mapping $f:X\longrightarrow Y$, a set-valued mapping $F:X\tto Y$ and a point $\bx \in X$ satisfying \eqref{eqGeneralizedEquation}. Suppose that $f$ is twice continuously Fréchet differentiable around $\bx$ with H\"older continuous second derivative of order $p\in (0,1]$ around $\bx$ and $F$ has a closed graph. Assume that
the mapping $f(\bx)+f'(\bx)(\cdot-\bx)+F$ is metrically regular around $(\bx,0)$. Then there is $\gamma>0$ such that for each $x_0\in \ball_X[\bx, \gamma]$ the scheme \eqref{eqIterationHalley} generates sequences $( x_k)$ and  $( u_k)$  such that $(x_k)$ converges  to $\bx$. Moreover, there is $L>0$ such that
\begin{eqnarray}
	\Vert x_{k+1}-\bx\Vert \leq L \Vert x_k-\bx\Vert^{2+p}\quad\text{for each}\quad k\in \mathbb{N}_0
\label{cubic_convergence}
\end{eqnarray}
\end{theorem}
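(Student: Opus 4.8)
The plan is to use the stability result for metric regularity under Lipschitz perturbations (Theorem~\ref{thmSumStability}) to control both substeps of the iteration, and then a standard telescoping/induction argument on the error $\|x_k-\bx\|$. Write $A := f'(\bx)$ and let $\mathcal{G}(x) := f(\bx)+f'(\bx)(x-\bx)+F(x)$ be the reference linearization, which by hypothesis is metrically regular around $(\bx,0)$ with some constant $\kappa$ on balls $\ball_X[\bx,a]\times\ball_Y[\by,b]$ with $\by=0$. Fix $\kappa'>\kappa$ and choose $\mu>0$ small with $\kappa\mu<1$ and $\kappa'>\kappa/(1-\kappa\mu)$. The idea is that each substep of \eqref{eqIterationHalley} can be viewed as solving $0\in \mathcal{G}(x) + r_k(x)$, where $r_k$ is a single-valued perturbation which is Lipschitz with small modulus (controlled by $\mu$ after shrinking the neighbourhood, using continuity of $f'$ and boundedness of $f''$) and has small value at $\bx$ (controlled by the Taylor remainder, hence of high order in $\|x_k-\bx\|$).

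The key estimates are the following. For the predictor step, $u_{k+1}$ solves $0\in f(x_k)+f'(x_k)(u_{k+1}-x_k)+F(u_{k+1})$; rewriting this as $0 \in \mathcal{G}(u_{k+1}) + g_k(u_{k+1})$ with $g_k(x) := f(x_k)+f'(x_k)(x-x_k) - f(\bx) - A(x-\bx)$, one checks $g_k$ is Lipschitz with modulus $\|f'(x_k)-A\| = O(\|x_k-\bx\|)$ (so $\le\mu$ for $x_0$ close enough), and $\|g_k(\bx)\| = \|f(x_k)+f'(x_k)(\bx-x_k)-f(\bx)\| \le \tfrac12 M \|x_k-\bx\|^2$ for a bound $M$ on $\|f''\|$ near $\bx$. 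Applying Theorem~\ref{thmSumStability} with $y=v=0$, $x=\bx\in(g_k+\mathcal{G})^{-1}(0)$ produces $u_{k+1}\in\ball_X[\bx,\ldots]$ with $\|u_{k+1}-\bx\|\le\kappa'\cdot 0 = 0$? — no: one applies it in the other direction, taking the reference solution $\bx$ as the approximate point for the perturbed inclusion to get existence of $u_{k+1}$ near $\bx$ with $\|\bx-u_{k+1}\|\le\kappa'\|0-0\|$ is vacuous, so instead one uses metric regularity of $g_k+\mathcal{G}$ (which holds by the theorem) directly: $\dist(\bx, (g_k+\mathcal{G})^{-1}(0)) \le \kappa'\,\dist(0, g_k(\bx)+\mathcal{G}(\bx)) \le \kappa'\|g_k(\bx)\| \le \tfrac{\kappa'M}{2}\|x_k-\bx\|^2$, which both gives existence of $u_{k+1}$ and the bound $\|u_{k+1}-\bx\| \le C_1\|x_k-\bx\|^2$. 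For the corrector step one argues identically with the perturbation $\tilde g_k(x) := f(x_k)+\bigl(f'(x_k)+\tfrac12 f''(x_k)(u_{k+1}-x_k)\bigr)(x-x_k) - f(\bx) - A(x-\bx)$; its Lipschitz modulus is $\|f'(x_k)-A+\tfrac12 f''(x_k)(u_{k+1}-x_k)\| = O(\|x_k-\bx\|)\le\mu$, and the crucial point is the residual
\begin{eqnarray*}
\|\tilde g_k(\bx)\| = \Bigl\| f(x_k)+\bigl(f'(x_k)+\tfrac12 f''(x_k)(u_{k+1}-x_k)\bigr)(\bx-x_k) - f(\bx)\Bigr\|,
\end{eqnarray*}
which by the Taylor expansion of $f(\bx)$ at $x_k$ with integral remainder, together with H\"older continuity of $f''$ of order $p$ and the already-proven bound $\|u_{k+1}-x_k - (\bx - x_k)\| = \|u_{k+1}-\bx\| \le C_1\|x_k-\bx\|^2$, is $O(\|x_k-\bx\|^{2+p})$. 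Then $\dist(\bx,(\tilde g_k+\mathcal{G})^{-1}(0)) \le \kappa'\|\tilde g_k(\bx)\| \le L\|x_k-\bx\|^{2+p}$, giving existence of $x_{k+1}$ with the desired estimate \eqref{cubic_convergence}.

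Finally I would close the induction: choose $\gamma>0$ small enough that (i) all the "$x_0$ close enough'' smallness conditions above hold on $\ball_X[\bx,\gamma]$, (ii) $L\gamma^{1+p}\le 1/2$ so that $\|x_{k+1}-\bx\|\le\tfrac12\|x_k-\bx\|$, ensuring $x_k\to\bx$ and that all iterates remain in $\ball_X[\bx,\gamma]$, keeping the perturbation hypotheses of Theorem~\ref{thmSumStability} valid at every step, and (iii) the constants $\alpha,\beta$ arising at each step satisfy \eqref{eqConstantIneq}; since $\beta$ can be taken $\asymp\|x_k-\bx\|^2$ and $\alpha\asymp\|x_k-\bx\|$ these are automatically satisfiable for $\gamma$ small. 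The main obstacle, and the step requiring the most care, is the residual estimate for the corrector: one must show that the Halley second-order correction term exactly cancels the quadratic part of the Taylor remainder so that what is left is genuinely of order $2+p$ — this requires writing $f(\bx) = f(x_k) + f'(x_k)(\bx-x_k) + \tfrac12 f''(x_k)(\bx-x_k)(\bx-x_k) + R$ with $\|R\|\le \tfrac{\ell}{(1+p)(2+p)}\|x_k-\bx\|^{2+p}$ from H\"older continuity of $f''$, and then bounding $\tfrac12\bigl(f''(x_k)(u_{k+1}-x_k) - f''(x_k)(\bx - x_k)\bigr)(\bx-x_k) = \tfrac12 f''(x_k)(u_{k+1}-\bx)(\bx-x_k)$, which is $O(\|x_k-\bx\|^2\cdot\|x_k-\bx\|) = O(\|x_k-\bx\|^3) = O(\|x_k-\bx\|^{2+p})$ since $p\le 1$; combining these two pieces is what yields the $O(\|x_k-\bx\|^{2+p})$ residual and hence the claimed convergence order.
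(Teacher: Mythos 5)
Your proposal is correct and follows essentially the same route as the paper: both apply Theorem~\ref{thmSumStability} to the reference linearization $f(\bx)+f'(\bx)(\cdot-\bx)+F$ with an affine perturbation built from $f'(x_k)$ and $\tfrac12 f''(x_k)(u_{k+1}-x_k)$, bound the predictor error by the quadratic Taylor residual, bound the corrector error via the cancellation $\tfrac12 f''(x_k)(u_{k+1}-\bx)(\bx-x_k)$ plus the H\"older remainder of order $2+p$, and close by induction with $\gamma$ small enough to keep all iterates in the ball and force contraction. The only cosmetic difference is that the paper's perturbation $g_{u,v}$ vanishes at $\bx$ with the Taylor residual fed in as the right-hand side $y$, whereas you absorb the residual into $g_k(\bx)$ and solve for $v=0$; the resulting estimates are identical.
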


\begin{proof}
	At first, find $\kappa>0$, $a>0$, $b>0$, and $\ell>0$ such that $f(\bx)+f'(\bx)(\cdot-\bx)+F$ is metrically regular around $(\bx,0)$ with the constant $\kappa>0$ and \nns $\ball_X[\bx,a]$ and $\ball_Y[0,b]$, $f'$ is Lipschitz continuous on $\ball_X[\bx, a]$  with the constant $\ell$ and $f''$ is H\"older continuous on $\ball_X[\bx, a]$ of order $p$ and constant $\ell$.
	
	Moreover, for each $x\in \ball_X[\bx, a]$, see  \cite[Proposition 1]{Wells1973},	 we have 
	 \begin{eqnarray*}
	 	\begin{aligned}
	 	\Vert f(x)-f(\bx)-f'(\bx)(x-\bx)\Vert&\leq& \tfrac{\ell}{2}\Vert x-\bx\Vert^2,\\
	 	\Vert f'(x)-f'(\bx)-f''(\bx)(x-\bx)\Vert&\leq& \tfrac{\ell}{2}\Vert x-\bx\Vert^{1+p},\\
	 	\Vert f(x)-f(\bx)-f'(\bx)(x-\bx)-\tfrac{1}{2} f''(\bx)({x}-\bx)(x-\bx)\Vert &\leq&  \tfrac{\ell}{6}\Vert x-\bx\Vert^{2+p}.
	 	\end{aligned}
	 \end{eqnarray*}
	Applying the previous inequalities, for each $x\in \ball_X[\bx, a]$ and each $u\in X$  we get that 
	\begin{eqnarray*}
	&&	\Vert f(x)-f(\bx)-f'(x)(x-\bx)\Vert\leq \Vert f(x)-f(\bx)-f'(\bx)(x-\bx)\Vert+ \Vert f'(\bx)(x-\bx)-f'(x)(x-\bx)\Vert\\&\leq& \tfrac{\ell}{2}\Vert x-\bx\Vert^2+\ell\Vert x-\bx\Vert^2=\tfrac{3\ell}{2}\Vert x-\bx\Vert^2
	\end{eqnarray*}
	and
	\begin{eqnarray*}
		&&	\Vert f(x)-f(\bx)-\left(f'(x)+\tfrac{1}{2} f''(x)(u-x)\right)(x-\bx)\Vert\\ &\leq& 	\Vert f(x)-f(\bx)-f'(\bx)(x-\bx)-\tfrac{1}{2} f''(\bx)({x}-\bx)(x-\bx)\Vert\\ 
		&+&\Vert f'(\bx)(x-\bx)-f'(x)(x-\bx)+ \tfrac{1}{2} f''(\bx)(x-\bx)(x-\bx)-\tfrac{1}{2} f''(x)(u-x)(x-\bx)\Vert\\
		&\leq& \tfrac{\ell}{6} \Vert x-\bx\Vert^{2+p}+\Vert f'(\bx)(x-\bx)-f'(x)(x-\bx)+  f''(\bx)(x-\bx)(x-\bx)\Vert\\
		&+&\Vert-\tfrac{1}{2} f''(\bx)(x-\bx)(x-\bx)-\tfrac{1}{2} f''(x)(u-x)(x-\bx)\Vert
			\leq  \tfrac{2\ell}{3} \Vert x-\bx\Vert^{2+p}\\
			&+&\Vert-\tfrac{1}{2} f''(\bx)(x-\bx)(x-\bx)-\tfrac{1}{2} f''(x)(u-x)(x-\bx)\Vert\\
			&\leq& \tfrac{2\ell}{3} \Vert x-\bx\Vert^{2+p}+ \Vert-\tfrac{1}{2} f''(\bx)(x-\bx)(x-\bx)+\tfrac{1}{2} f''(x)(x-\bx)(x-\bx)\Vert+\tfrac{1}{2} \Vert f''(x)(u-\bx)(x-\bx)\Vert\\
			&\leq & \tfrac{7\ell}{6} \Vert x-\bx\Vert^{2+p}+\tfrac{\ell}{2}\Vert u-\bx\Vert \Vert x-\bx\Vert.
	\end{eqnarray*}
	
	 Fix positive numbers $\alpha$, $\beta$, $\kappa'$, and $\mu$ such that \eqref{eqConstantIneq}, $\kappa \mu < 1$,  $\kappa' > \kappa / (1 - \kappa \mu)$, and $\kappa'\mu <1$ hold.
	
	 For $u, v\in X$, define a mappings $g_{u,v}:{X}\longrightarrow Y$ given by
	\begin{eqnarray*}
		g_{u,v}(x):=f'(u)(x-\bx)+\tfrac{1}{2} f''(u)(v-u)(x-\bx)-f'(\bx)(x-\bx)\text{ for } x\in X.
	\end{eqnarray*}

	Let $\gamma := \min \left\lbrace 0.9,  \tfrac{\beta}{2\ell}, \tfrac{\mu}{2\ell}, \sqrt{\tfrac{2 \beta }{3\ell}}, \sqrt[p]{\tfrac{2}{3\ell\kappa'}}, \sqrt[1+p]{\tfrac{12}{\ell \kappa'(14+9\ell \kappa')}},\sqrt[2+p]{\tfrac{12\beta}{\ell(14+9\ell \kappa')}}  \right\rbrace$. 
	Fix any $ u, v\in \ball_{X}[\bx,\gamma]$. Then  
	\begin{eqnarray*}
		\Vert g_{u,v}(\bx)\Vert=0< \beta
	\end{eqnarray*}
	and for each $x, z\in\ball_X[\bx, 2\kappa' \beta + \alpha]$ we have
\begin{eqnarray*}
		&& \Vert g_{u,v}(x)-g_{u,v}(z)\Vert=\Vert  (f'(u)-f'(\bx))(x-z)+\tfrac{1}{2} f''(u)(v-u)(x-z)\Vert\\
		&\leq& (\ell \Vert u-\bx\Vert  +\tfrac{\ell}{2}\Vert v-u\Vert ) \Vert x-z\Vert\leq \mu\Vert x-z\Vert.
	\end{eqnarray*}
	Then, by Theorem \ref{thmSumStability}, with $\by:=0$, $g:=g_{u,v}$, and $G:=f(\bx)+f'(\bx)(\cdot-\bx)+F$, we get the claim that for each $y\in \ball_Y[0,\beta]$,  each $u\in \ball_{X}[\bx,2\kappa' \beta+\alpha]$, and each $v\in \ball_{X}[\bx,\gamma ]$ there is $x\in\ball_X[\bx,2\kappa'\beta+\alpha]$ such that
	\begin{eqnarray}
		y\in f(\bx)+f'(u)(x-\bx)+\tfrac{1}{2} f''(u)(v-u)(x'-\bx)+F(x)\quad\text{and}\quad\Vert x-\bx\Vert\leq \kappa'\Vert y\Vert.
	\end{eqnarray}

Fix any $x_0\in \ball_X[\bx, \gamma]$. Now, suppose that for some $k\in\mathbb{N}$ and for each $i= 1,2,\dots, k-1,k$ we have $x_i \in \ball_X[\bx,\gamma]$ and $u_i\in \ball_X[\bx, \gamma]$, generated by \eqref{eqIterationHalley}, such that
\begin{eqnarray}
	\label{eqEstimateError1}
	\Vert u_{i}-\bx\Vert\leq \tfrac{3\ell \kappa' }{2}  \Vert x_{i-1}-\bx\Vert^{1+p}\quad\text{and}\quad
	\Vert x_i-\bx\Vert\leq \kappa' \ell  \tfrac{14+9\ell \kappa'}{12}  \Vert x_{i-1}-\bx\Vert^{2+p}.%
\end{eqnarray}
We are showing that  there are $x_{k+1}\in\ball_{X}[\bx,\gamma]$ and $u_{k+1}\in \ball_{X}[\bx,\gamma]$, generated by \eqref{eqIterationHalley}, such that the   \eqref{eqEstimateError1} hold for $k+1$ instead of $i$.

By the definition of $\gamma$, we have $-(f(x_k)-f(\bx)-f'(x_k)(x_k-\bx))\in \ball_Y[0,\beta]$. For $u:=v:=x_k$, by the claim, with $y:=-(f(x_k)-f(\bx)-f'(x_k)(x_k-\bx))$, there is $u_{k+1}\in \ball_X[\bx,2\kappa'\beta+\alpha]$  satisfying the first inclusion in \eqref{eqIterationHalley}  such that 
\begin{eqnarray*}
 \Vert u_{k+1}-\bx\Vert\leq \kappa' \Vert y\Vert\leq   \tfrac{3\ell \kappa'}{2}\Vert x_k-\bx\Vert^2\leq \tfrac{3\ell \kappa'}{2}\Vert x_k-\bx\Vert^{1+p}\leq \gamma.
\end{eqnarray*}
By the definition of $\gamma$, we have $-(f(x_k)-f(\bx)-\left(f'(x_k)+\tfrac{1}{2} f''(x_k)(u_{k+1}-x_k)\right)(x_k-\bx))\in \ball_Y[0,\beta]$. For $u:=x_k$ and $v:=u_{k+1}$, by the claim, with $y:=-(f(\bx)-f(x_k)-\left(f'(x_k)+\tfrac{1}{2} f''(x_k)(u_{k+1}-x_k)\right)(\bx-x_k))$, there is $x_{k+1}\in \ball_X[\bx,2\kappa'\beta+\alpha]$ satisfying the second inclusion in \eqref{eqIterationHalley} such that 
\begin{eqnarray*}
&&\Vert x_{k+1}-\bx\Vert\leq \kappa'\Vert y\Vert\leq \tfrac{7\ell\kappa'}{6 .} \Vert x_k-\bx\Vert^{2+p}+\tfrac{\ell \kappa'}{2}\Vert u_{k+1}-\bx\Vert \Vert x_k-\bx\Vert
\leq \ell\kappa'   \tfrac{14+9\ell \kappa'}{12}\Vert x_k-\bx\Vert^{2+p}\leq\gamma.
\end{eqnarray*}
 By the induction, we have showed that \eqref{eqEstimateError1} holds for each $i\in \mathbb{N}_0$. Since $\gamma<1$, then $(x_k)$ converges to $\bx$.
%
%
\end{proof}
The result in Theorem 2.1 easily implies the single-valued version when the set-valued mapping is trivial, i.e.,  $F(x):=\lbrace 0\rbrace$. Remember when $f'(\bx)$ is subjective, then the mapping $X\ni x\longmapsto f'(\bx)x$ is metrically regular around $\bx$.
\begin{proposition}\label{thmHalleyLocal2} Let $(X,\Vert \cdot\Vert)$ and $(Y,\Vert \cdot\Vert)$ be Banach spaces. Consider a single-valued mapping $f:X\longrightarrow Y$, and a point $\bx \in X$ satisfying $f(\bx)=0$. Suppose that $f$ is twice continuously Fréchet differentiable around $\bx$ with H\"older continuous second derivative of the order $p\in (0,1]$ around $\bx$. Assume that the mapping $f'(\bx)$ is surjective. Then there is $\gamma>0$ such that for each $x_0\in \ball_{X}[\bx, \gamma]$ the scheme \eqref{eqIterationHalleySingle1} generates sequences $( x_k)$ and  $( u_k)$  such that $(x_k)$ converges  to $\bx$. Moreover, there is $L>0$ such that
	\begin{eqnarray*}
				\label{cubic_convergence2}
		\Vert x_{k+1}-\bx\Vert \leq L \Vert x_k-\bx\Vert^{2+p}\quad\text{for each}\quad k\in \mathbb{N}_0
	\end{eqnarray*}
\end{proposition}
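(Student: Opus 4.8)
The plan is to derive Proposition~\ref{thmHalleyLocal2} as a direct specialization of Theorem~\ref{thmHalleyLocal}. First I would set $F(x):=\{0\}$ for all $x\in X$, so that the generalized equation \eqref{eqGeneralizedEquation} reduces to the equality $f(x)=0$ and the iteration scheme \eqref{eqIterationHalley} collapses to \eqref{eqIterationHalleySingle1}: the inclusion $0\in f(x_k)+f'(x_k)(u_{k+1}-x_k)+F(u_{k+1})$ becomes the equation $0=f(x_k)+f'(x_k)(u_{k+1}-x_k)$, and similarly for the corrector step. Clearly this $F$ has closed graph (its graph is $X\times\{0\}$, a closed subspace of $X\times Y$), and $f(\bx)=0$ is exactly the hypothesis \eqref{eqGeneralizedEquation} for this choice of $F$. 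The smoothness and H\"older-continuity hypotheses on $f$ and $f''$ carry over verbatim.

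The only point requiring a short argument is that the linearized mapping $\Phi:=f(\bx)+f'(\bx)(\cdot-\bx)+F$ is metrically regular around $(\bx,0)$. With $F\equiv\{0\}$ and $f(\bx)=0$ this mapping is simply the affine map $x\mapsto f'(\bx)(x-\bx)$. By the hypothesis that $f'(\bx)$ is surjective, the bounded linear operator $A:=f'(\bx):X\to Y$ maps onto $Y$; by the open mapping theorem $A$ is an open map, and a standard fact (see, e.g., \cite[\S5C]{DR2014}) gives that a surjective bounded linear operator between Banach spaces is metrically regular everywhere, with $\reg$ constant equal to the norm of the right inverse. A translation by the fixed vector $\bx$ in the domain does not affect metric regularity, so $\Phi$ is metrically regular around $(\bx,0)$. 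This is precisely the remark preceding the statement, so I would simply cite it.

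With all hypotheses of Theorem~\ref{thmHalleyLocal} verified for the pair $(f,F)$ with $F\equiv\{0\}$, the theorem yields a radius $\gamma>0$ such that for every $x_0\in\ball_X[\bx,\gamma]$ the scheme \eqref{eqIterationHalley} — which, as noted, coincides with \eqref{eqIterationHalleySingle1} for this $F$ — generates sequences $(x_k)$ and $(u_k)$ with $x_k\to\bx$, together with a constant $L>0$ for which $\|x_{k+1}-\bx\|\le L\|x_k-\bx\|^{2+p}$ holds for all $k\in\mathbb{N}_0$. This is exactly the conclusion of Proposition~\ref{thmHalleyLocal2}, completing the proof.

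I do not anticipate a genuine obstacle here; the content is entirely in Theorem~\ref{thmHalleyLocal}. The only thing to be careful about is the bookkeeping: one must check that the two inclusions in \eqref{eqIterationHalley} really do reduce to the two equations in \eqref{eqIterationHalleySingle1} (in particular that the operator $f'(x_k)+\tfrac12 f''(x_k)(u_{k+1}-x_k)$ appearing in the corrector is the same in both formulations, which it is, since $F$ contributes nothing), and that the well-definedness claim of Theorem~\ref{thmHalleyLocal} — existence of $u_{k+1}$ and $x_{k+1}$ solving the respective inclusions — translates into invertibility-free statements about the equations, exactly as recorded in \eqref{eqIterationHalleySingle2}. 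No further estimates are needed.
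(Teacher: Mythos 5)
Your proposal is correct and matches the paper's proof, which is the one-line reduction ``apply Theorem~\ref{thmHalleyLocal} with $F:=\{0\}$''; you merely spell out the verification of the metric-regularity hypothesis via the open mapping theorem, which the paper delegates to the remark preceding the proposition. No issues.
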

\begin{proof}The statement follows from Theorem \ref{thmHalleyLocal} with $F:= \lbrace 0\rbrace$.
	\end{proof}

\section{Semilocal convergence of Josephy-Halley method}

In this section, we establish a semilocal convergence result for the iterative scheme \eqref{eqIterationHalley} by extending the classical Kantorovich framework. Our analysis leverages a modified majorant function technique, as introduced in \cite{LX2014}, to derive conditions under which the iteration converges to a solution of the generalized equation \eqref{eqGeneralizedEquation}.

To this end, for given positive constants $\kappa$, $\ell_1$, $\ell_2$, and $\eta$, we define the scalar function
\begin{eqnarray}
	\label{eqFunctionH}
h(t):=\frac{\kappa \ell_2}{6}\,t^3+\frac{\kappa \ell_1}{2}\,t^2-t+\eta,\quad t\in\mathbb{R}.
\end{eqnarray}
Furthermore, we introduce two sequences $(s_k)$ and $(t_k)$ by setting
\begin{equation}\label{eqSequences}
	t_0=0, s_0=0,\quad s_{k+1}=t_k-\frac{h(t_k)}{h'(t_k)},\quad
	t_{k+1}=t_k-\frac{h(t_k)}{h'(t_k)+\frac{1}{2}\,h''(t_k)(s_{k+1}-t_k)}, \text{ for }k=1,2,\ldots.
\end{equation}
 These sequences act as scalar majorants that control the propagation of the error in our iterative process. The convergence result \cite{LX2014} provides the foundation for proving that the iterative scheme \eqref{eqIterationHalley} converges semilocally to a solution of the generalized equation, with explicit estimates on the convergence rate and the radius of the convergence region.

\begin{lemma}\label{lemSequences} Let the  positive scalars $\kappa $, $\ell_1, \ell_2,$ and $\eta $ be such that  
	\begin{eqnarray*}
		\eta < \frac{2(\kappa \ell_1 + 2\sqrt{\kappa^2 \ell_1^2 + 2\kappa \ell_2})}{3(\kappa \ell_1 + \sqrt{\kappa^2 \ell_1^2 + 2\kappa \ell_2})^2}.
	\end{eqnarray*}
	Then
	\begin{enumerate}
		\item[\rm (i)]  $h$ has two positive roots, denoted by $\bar{t}$ and $\hat{t}$, satisfying $\bar{t} \leq \hat{t}$;
		\item[\rm (ii)] $-1<h'(t)<0$ for each $t\in (0, \bar{t})$;
		\item[\rm (iii)]   the sequences $(t_k)$ and $(s_k)$, generated by \eqref{eqSequences}, are increasing and converge to $\bar{t}$;
		\item[\rm (iv)] there are $M>0$ and $\alpha \in(0,1)$ such that   
		\begin{eqnarray*}
			\bar{t} - t_{k} \leq M \alpha^{3^k}\quad \text{for each}\quad k\in\mathbb{N}_0.
		\end{eqnarray*}
	\end{enumerate}	 
\end{lemma}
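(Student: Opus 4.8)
The plan is to treat $h$ as a cubic majorant and transcribe the classical Kantorovich–Halley scalar analysis (as in \cite{LX2014}) to this slightly generalized form, where the role of $\kappa$ is just to rescale the Lipschitz constants $\ell_1,\ell_2$. First I would study the shape of $h$. Since $h(0)=\eta>0$ and $h'(0)=-1<0$, the function decreases initially; its derivative $h'(t)=\frac{\kappa\ell_2}{2}t^2+\kappa\ell_1 t-1$ is a convex parabola in $t$ with a unique positive root, call it $t^\ast=\frac{-\kappa\ell_1+\sqrt{\kappa^2\ell_1^2+2\kappa\ell_2}}{\kappa\ell_2}$ (this is exactly $\bigl(\kappa\ell_1+\sqrt{\kappa^2\ell_1^2+2\kappa\ell_2}\bigr)^{-1}$ after rationalizing), so $h$ is strictly decreasing on $(0,t^\ast)$ and strictly increasing on $(t^\ast,\infty)$. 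Hence $h$ has two positive roots $\bar t\le\hat t$ (with equality iff $h(t^\ast)=0$) precisely when $h(t^\ast)\le 0$; a direct computation of $h(t^\ast)$ using $\frac{\kappa\ell_2}{2}(t^\ast)^2+\kappa\ell_1 t^\ast=1$ shows that $h(t^\ast)\le 0$ is equivalent to the stated upper bound on $\eta$ (the strict inequality giving $\bar t<t^\ast<\hat t$, which is what we want for (ii)). This proves (i), and (ii) follows since on $(0,\bar t)\subset(0,t^\ast)$ we have $h'$ strictly between $h'(0)=-1$ and $h'(t^\ast)=0$.

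For (iii) I would argue by induction that $0\le t_k\le s_{k+1}\le t_{k+1}<\bar t$, with $h(t_k)>0$, $h(s_{k+1})>0$. Given $t_k\in[0,\bar t)$, monotonicity of $h$ on $(0,t^\ast)$ gives $h(t_k)>0$, and since $h'(t_k)\in(-1,0)$ the Newton-type predictor $s_{k+1}=t_k-h(t_k)/h'(t_k)>t_k$; convexity of $h$ on the relevant range (here I'd note $h''(t)=\kappa\ell_2 t\ge 0$, so $h$ is convex on $[0,\infty)$) forces $s_{k+1}\le\bar t$ by the standard Newton-majorant argument (the tangent line at $t_k$ lies below the graph, so its root does not overshoot the root of $h$). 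Then $h'(t_k)+\tfrac12 h''(t_k)(s_{k+1}-t_k)$ is still negative (it lies between $h'(t_k)$ and $h'(s_{k+1})$ by convexity, both negative on $(0,\bar t)$ — this is the step that most needs care), so $t_{k+1}=t_k-h(t_k)/(h'(t_k)+\tfrac12 h''(t_k)(s_{k+1}-t_k))$ is well defined and exceeds $s_{k+1}$ because the denominator is $\ge h'(t_k)$ in absolute value is smaller... more precisely $|h'(t_k)+\tfrac12 h''(t_k)(s_{k+1}-t_k)|\le|h'(t_k)|$ since adding a nonnegative quantity to a negative one shrinks its modulus, hence the Halley step is at least the Newton step: $t_{k+1}\ge s_{k+1}$. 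A parallel convexity/tangent argument shows $t_{k+1}\le\bar t$. Thus $(t_k)$ and $(s_k)$ are increasing and bounded by $\bar t$, so they converge to some limit $t_\infty\le\bar t$; passing to the limit in the recursion and using $h'(t_\infty)<0$ (valid since $t_\infty\le\bar t<t^\ast$, unless $\bar t=t^\ast$ which the strict hypothesis on $\eta$ excludes) forces $h(t_\infty)=0$, whence $t_\infty=\bar t$.

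For (iv) I would derive a cubic recursion for the errors $e_k:=\bar t-t_k$. Taylor-expanding $h$ at $t_k$ and using $h(\bar t)=0$, together with the explicit Halley step, one obtains a bound of the form $e_{k+1}\le C\,e_k^3$ for a constant $C$ depending only on $\kappa,\ell_1,\ell_2$ and on $\min_{[0,\bar t]}|h'|$ (this minimum is attained at $\bar t$ and is positive by (ii)); the cleanest route is to use that Halley's iteration applied to a function with nonvanishing derivative on $[0,\bar t]$ has local order three, and the errors stay in the regime where the third-order estimate is uniform because all iterates lie in $[0,\bar t]$. Once $e_{k+1}\le C e_k^3$ is in hand, setting $\alpha:=\max\{C^{1/2}e_0,q\}$ for a suitable $q<1$ (possible after noting $e_0=\bar t$ and, if $\bar t$ is too large, replacing the estimate by a finite number of initial steps after which $C^{1/2}e_k<1$) and iterating gives $e_k\le C^{-1/2}\alpha^{3^k}$, i.e. the claimed bound with $M=C^{-1/2}$. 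The main obstacle throughout is bookkeeping the sign and size of the Halley denominator $h'(t_k)+\tfrac12 h''(t_k)(s_{k+1}-t_k)$ and confirming — via convexity of $h$ — that it never vanishes and that both sub-steps stay trapped in $[0,\bar t)$; everything else is the standard scalar Kantorovich induction.
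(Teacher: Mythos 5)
Your proposal is essentially correct, and it is worth noting that the paper itself gives no proof of this lemma at all: it only points to the scalar majorant analysis of Ling--Xu \cite{LX2014}, so your argument is a self-contained reconstruction of exactly the route the authors have in mind (shape analysis of the cubic $h$, Newton/Halley tangent-line trapping via convexity, and the cubic error recursion). All the delicate points you flag do go through: the Halley denominator satisfies $h'(t_k)\le h'(t_k)+\tfrac12 h''(t_k)(s_{k+1}-t_k)\le h'(s_{k+1})<0$ because $h''$ is nonnegative and increasing, and the inequality $t_{k+1}\le\bar t$ follows from the exact third-order Taylor identity $h(t_k)+\bigl(h'(t_k)+\tfrac12 h''(t_k)(s_{k+1}-t_k)\bigr)(\bar t-t_k)=\tfrac12 h''(t_k)(\bar t-t_k)(s_{k+1}-\bar t)-\tfrac16\kappa\ell_2(\bar t-t_k)^3\le 0$, which simultaneously yields the cubic error bound for (iv) once $|h'|$ is bounded below by $|h'(\bar t)|>0$ on $[0,\bar t]$. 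Two small slips should be corrected before this is written up: first, $h''(t)=\kappa\ell_2 t+\kappa\ell_1$, not $\kappa\ell_2 t$ (harmless, since the extra term only strengthens convexity, but it enters the constants in (iv)); second, rationalizing the positive root of $h'$ gives $t^\ast=2\bigl(\kappa\ell_1+\sqrt{\kappa^2\ell_1^2+2\kappa\ell_2}\bigr)^{-1}$, with a factor $2$ you dropped --- with the correct $t^\ast$ one indeed finds $h(t^\ast)=\eta-\tfrac{2}{3}t^\ast+\tfrac{\kappa\ell_1}{6}(t^\ast)^2$, and $h(t^\ast)<0$ is precisely the stated bound on $\eta$, so your claimed equivalence is right but would not survive the miscomputed $t^\ast$.
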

Before presenting the main result of this section, we first establish the following remark.
\begin{remark}
	Under the assumptions of Lemma \ref{lemSequences} and using \eqref{eqSequences}, for each $k\in\mathbb{N}_0$, we get 
	\begin{eqnarray*}
		h(t_k) &=& h(t_{k-1})+h'(t_{k-1})(t_k-t_{k-1})+\tfrac{1}{2} h''(t_{k-1})(t_k-t_{k-1})^2+\tfrac{1}{6} h'''(t_{k-1})(t_k-t_{k-1})^3\\
		&=& \tfrac{1}{2} h''(t_k)(s_{k}-t_{k-1})(t_k-t_{k-1}) +\tfrac{1}{2} h''(t_{k-1})(t_k-t_{k-1})^2+\tfrac{1}{6} h'''(t_{k-1})(t_k-t_{k-1})^3\\
		&\geq&\tfrac{\kappa\ell_1}{2}(s_k-t_k)(t_k-t_{k-1})+\tfrac{\kappa\ell_1}{2}(t_k-t_{k-1})^2+\tfrac{\kappa \ell_2}{6}(t_k-t_{k-1})^3.
	\end{eqnarray*}
\end{remark}
With the above remark and the preliminary estimates established, we are now in a position to state our main semilocal convergence result. The following theorem summarizes the conditions under which the iterative scheme \eqref{eqIterationHalley} converges to a solution of the generalized equation, and provides explicit error bounds on the convergence.

\begin{theorem}\label{thmKantorovich} Let $(X,\Vert \cdot\Vert)$ and $(Y,\Vert \cdot\Vert)$ be Banach spaces. Consider a set-valued mapping $F:X\tto Y$ with a closed graph and twice continuously differentiable mapping $f:X\longrightarrow Y$.	Let the positive scalars $a $, $b $, $\kappa$, $\ell_1, \ell_2,$ and $\eta$ and let a number $\bar{t}$ be the smallest positive root of function $h$ given by \eqref{eqFunctionH}. Suppose that the points $x_0 \in X$, $y_0 \in f(x_0) + F(x_0)$ are such that
	\begin{itemize}
\item[\rm (i)] $
   \kappa \Vert y_0\| < \eta<\dfrac{2(\kappa \ell_1+2\sqrt{\kappa^2 \ell_1^2+2\kappa \ell_2})}{3(\kappa \ell_1+\sqrt{\kappa^2 \ell_1^2+2\kappa \ell_2})^2},\quad$ $\quad\frac{3\ell_1}{2}\bar{t}^2+\Vert y_0\Vert<b,\quad$ and $\quad \bar{t}<a$;

	\item[\rm (ii)] the mapping
	$
	X\ni x \longmapsto G(x) := f(x_0) + f'(x_0)(x - x_0) + F(x)
	$
	is metrically regular around $(x_0, y_0)$ with constant $\kappa$ and \nns $\ball_X[x_0, a]$ and $\ball_Y[y_0,b]$;
	\item[\rm (iii)]  $\Vert f''(x)\Vert \leq \ell_1$ and
	$\Vert f''(x)-f''(u)\Vert\leq \ell_2\Vert x-u\Vert \quad\text{for each}\quad x,u\in \ball_X[x_0,a].$
\end{itemize}

Then  there exists a sequence $(x_k)$ generated by the iteration \eqref{eqIterationHalley} with the starting point $x_0$ which remains in $ \ball_X[x_0,\bar{t}]$ and converges to a solution $\bar{x} \in  \ball_X[x_0,\bar{t}]$ of \eqref{eqGeneralizedEquation}; moreover, the convergence rate is $R$-cubic, i.e., there are $\alpha\in (0,1)$ and $M>0$ such that
$$
\Vert x_k - \bar{x}\Vert \leq M \alpha^{3^k}\quad\text{and}\quad\dist(0,f(x_k)+F(x_k))  \leq M \alpha^{3^k} \quad \text{for every } k \in \mathbb{N}_0.
$$
\end{theorem}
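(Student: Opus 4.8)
The plan is to run the scalar majorant sequences $(s_k)$, $(t_k)$ from \eqref{eqSequences} as an upper bound for the vector iterates, in the spirit of the Kantorovich argument, and combine this with the perturbation-stability Theorem~\ref{thmSumStability} to keep the linearized map metrically regular at each step. First I would verify, using Lemma~\ref{lemSequences}(i)--(ii) and hypothesis (i), that $h$ has its smallest positive root $\bar t$ with $\bar t<a$, that $-1<h'(t)<0$ on $(0,\bar t)$, and that $(t_k)$, $(s_k)$ are increasing with limit $\bar t$; these facts give the scalar skeleton. The core of the proof is an induction on $k$ establishing simultaneously that (a) $x_k\in\ball_X[x_0,t_k]$, (b) a predictor $u_{k+1}\in\ball_X[x_0,s_{k+1}]$ and corrector $x_{k+1}\in\ball_X[x_0,t_{k+1}]$ exist solving \eqref{eqIterationHalley}, (c) $\|x_{k+1}-x_k\|\le t_{k+1}-t_k$ and $\|u_{k+1}-x_k\|\le s_{k+1}-t_k$, and (d) $\dist(0,f(x_k)+F(x_k))\le -h(t_k)$ or a comparable bound controlled by $h(t_k)$ and the increments.

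The inductive step proceeds as follows. For the predictor, I would apply Theorem~\ref{thmSumStability} with $G$ as in (ii), with the perturbation $g(x):=f(x_0)+f'(x_k)(x-x_0)-[f(x_0)+f'(x_0)(x-x_0)]=(f'(x_k)-f'(x_0))(x-x_0)$ plus the residual term, using $\|f''\|\le\ell_1$ (hence $f'$ is $\ell_1$-Lipschitz) to bound the Lipschitz modulus $\mu$ by something like $\kappa\ell_1 t_k<1$ on the relevant ball — this is where $h'(t_k)\in(-1,0)$, i.e. $\kappa\ell_1 t_k+\tfrac{\kappa\ell_2}{2}t_k^2<1$, enters; it guarantees $\kappa\mu<1$ and lets us take $\kappa'$ of order $\kappa/|h'(t_k)|$. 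The residual $0\in f(x_k)+f'(x_k)(u_{k+1}-x_k)+F(u_{k+1})$ is solved by noting $y_0-[f(x_k)+f'(x_k)(x_k-x_0)]$ is small: a Taylor estimate with $\|f''\|\le\ell_1$ gives $\|f(x_k)-f(x_0)-f'(x_0)(x_k-x_0)\|\le\tfrac{\ell_1}{2}\|x_k-x_0\|^2\le\tfrac{\ell_1}{2}t_k^2$, and combined with $\|y_0\|$ this stays in $\ball_Y[y_0,b]$ by (i). The perturbation theorem then yields $u_{k+1}$ with $\|u_{k+1}-x_k\|\le\kappa'\cdot(\text{residual})\le s_{k+1}-t_k$, matching the scalar step $s_{k+1}-t_k=-h(t_k)/h'(t_k)$. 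For the corrector, I repeat this with the perturbed linear operator $f'(x_k)+\tfrac12 f''(x_k)(u_{k+1}-x_k)$, whose distance from $f'(x_0)$ is bounded using $\ell_1$-Lipschitzness of $f'$ and $\|f''\|\le\ell_1$; the residual is estimated by the third-order Taylor bound from \cite{Wells1973} together with $\|f''(x)-f''(u)\|\le\ell_2\|x-u\|$ from (iii), producing $\|x_{k+1}-x_k\|\le t_{k+1}-t_k$, exactly mirroring \eqref{eqSequences} and the Remark's lower bound $h(t_k)\ge\tfrac{\kappa\ell_1}{2}(s_k-t_k)(t_k-t_{k-1})+\cdots$.

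Convergence then follows: since $(t_k)\uparrow\bar t$ the increments $\|x_{k+1}-x_k\|\le t_{k+1}-t_k$ are summable, so $(x_k)$ is Cauchy, hence converges to some $\bar x\in\ball_X[x_0,\bar t]$; closedness of $\gph F$ and continuity of $f,f',f''$ pass to the limit in \eqref{eqIterationHalley} to give $0\in f(\bar x)+F(\bar x)$. The $R$-cubic rate comes from Lemma~\ref{lemSequences}(iv): $\|x_k-\bar x\|\le\sum_{j\ge k}(t_{j+1}-t_j)=\bar t-t_k\le M\alpha^{3^k}$, and the residual bound $\dist(0,f(x_k)+F(x_k))\le\dist(0,f(x_k)+f'(x_{k-1})(x_k-x_{k-1})+\cdots+F(x_k))+(\text{Taylor remainder})$ is likewise majorized by $-h(t_k)$ plus lower-order terms, which is $O(\alpha^{3^k})$. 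I expect the main obstacle to be the bookkeeping in the corrector step: one must carefully track that the second-order perturbation $\tfrac12 f''(x_k)(u_{k+1}-x_k)(\cdot)$ contributes a Lipschitz modulus increment controlled by $s_{k+1}-t_k$ (not just $t_k$), so that the composite modulus $\mu$ still satisfies $\kappa\mu<1$ — this is precisely why the scalar corrector uses $h'(t_k)+\tfrac12 h''(t_k)(s_{k+1}-t_k)$ in its denominator, and matching the vector and scalar perturbation constants requires the Remark's inequality and the monotonicity $s_{k+1}\le\bar t$. Ensuring all the radii $2\kappa'\beta+\alpha$ and $b$-side constraints in \eqref{eqConstantIneq} are met uniformly in $k$ (using $t_k,s_k<\bar t<a$) is the other delicate point.
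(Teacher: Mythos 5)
Your overall architecture coincides with the paper's: the same scalar majorants $(s_k),(t_k)$, the same induction targets $\Vert u_{k+1}-x_k\Vert\le s_{k+1}-t_k$ and $\Vert x_{k+1}-x_k\Vert\le t_{k+1}-t_k$, the Cauchy argument, the closed-graph passage to the limit, and the rate from Lemma~\ref{lemSequences}(iv). The divergence is in the one step that actually produces the iterates, and there you have a genuine gap. You propose to manufacture $u_{k+1}$ and $x_{k+1}$ via the perturbation-stability Theorem~\ref{thmSumStability}. That theorem only applies after choosing $\alpha,\beta$ satisfying \eqref{eqConstantIneq}, and its conclusion requires both the residual point $y$ and the target $v=0$ to lie in $\ball_Y[y_0,\beta]$, which forces $\beta\ge\Vert y_0\Vert$; it also requires the current iterate $x_k$ to lie in $\ball_X[x_0,\alpha]$, which forces $\alpha\ge \bar t$ uniformly in $k$. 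The first inequality of \eqref{eqConstantIneq} then demands $a\ge 2\kappa'\Vert y_0\Vert+\bar t$, whereas hypothesis (i) of the theorem only provides $\bar t<a$ (and $\kappa\Vert y_0\Vert$ can be close to $\eta$, which is comparable to $\bar t$). So under the stated hypotheses your tool need not be applicable at all; you have flagged this as ``the other delicate point'' but it is not a bookkeeping issue, it is an obstruction. A second, related difficulty is that matching the scalar steps exactly requires, at stage $k$, a modulus $\kappa'_k$ squeezed into the interval $\bigl(\kappa/(1-\kappa\mu_k),\,\kappa/(-h'(t_k))\bigr]$ with $\mu_k\approx \ell_1 t_k$ (predictor) or $\ell_1 t_k+\tfrac{\ell_1}{2}(s_{k+1}-t_k)$ (corrector); such a $\kappa'_k$ exists, but it is $k$-dependent, and each choice re-triggers \eqref{eqConstantIneq} with new constants, so the uniformity you need is never established.

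The paper avoids all of this by not invoking any perturbation result in the semilocal setting. Instead it applies the set-valued fixed-point Theorem~\ref{thmFixedPoint} directly to
$\Phi_k^1(x)=G^{-1}\bigl(f(x_0)+f'(x_0)(x-x_0)-f(x_k)-f'(x_k)(x-x_k)\bigr)$ on $\ball_X[x_k,s_{k+1}-t_k]$, and to the analogous $\Phi_k^2$ (with the extra term $-\tfrac12 f''(x_k)(u_{k+1}-x_k)(x-x_k)$) on $\ball_X[x_k,t_{k+1}-t_k]$. The metric regularity estimate for $G$ itself, used twice, yields exactly the two hypotheses of the fixed-point theorem: the distance condition $\dist(x_k,\Phi_k(x_k))\le h(t_k)\le(1-\lambda_k)\cdot(\text{radius})$ via the Remark, and the contraction constants $\lambda_k=\kappa\ell_1 t_k$ resp. $\kappa\ell_1 t_k+\tfrac{\kappa\ell_1}{2}(s_{k+1}-t_k)$, which are $<1$ precisely because $h'(t_k)+1<1$ resp. $h'(s_{k+1})+1<1$. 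This is why hypothesis (i) only needs $\bar t<a$ and $\tfrac{3\ell_1}{2}\bar t^2+\Vert y_0\Vert<b$: those conditions merely keep the arguments of $G^{-1}$ inside the regularity neighbourhoods. If you want to salvage your route, you would have to strengthen (i) to something like $2\kappa'\Vert y_0\Vert+\bar t<a$; otherwise, replace Theorem~\ref{thmSumStability} by Theorem~\ref{thmFixedPoint} in the inductive step.
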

\begin{proof} Consider sequences $(t_k)$ and  $(s_k)$ given by \eqref{eqSequences}.	Using Lemma \ref{lemSequences}, the sequences $(t_k)$ and $(s_k)$ are increasing and converge to $\bar{t}$ and  there are $M>0$ and $\alpha \in(0,1)$ such that   
\begin{eqnarray*}
	\bar{t} - t_{k} \leq M \alpha^{3^k}\quad \text{for each}\quad k\in\mathbb{N}_0.
\end{eqnarray*}
We are following proof of \cite[Theorem 3.4.]{CDPVR2018}.
	
	We are showing, by induction, that there are  sequences $(x_k)$ and $(u_k)$ in $\ball_X[x_0,\bar{t}]$ fulfilling \eqref{eqIterationHalley} with the starting point $x_0$ which for each  $k = 0, 1, 2, \ldots$ we have
	\begin{eqnarray}
		\label{eqEstimated}
	\|u_{k+1} - x_k\| \leq s_{k+1} - t_k\quad \text{and}\quad	\|x_{k+1} - x_{k}\| \leq t_{k+1} - t_{k}. 
	\end{eqnarray}
   Note that $\Vert x_k-x_0\Vert \leq t_k$.
   
For $k\in\mathbb{N}$, suppose that  \eqref{eqEstimated} holds for $k:=k-1$. For each $x\in\ball_{X}[x_k,\bar{t}-t_k]\subset \ball_{X}[x_0,\bar{t}]$ we have
\begin{eqnarray*}
	&&\Vert f(x_0) + f'(x_0)(x - x_0) - f(x_k) - f'(x_k)(x - x_k) \Vert \leq \Vert f(x)-f(x_0) - f'(x_0)(x - x_0)\Vert\\
	& +& \Vert f(x)- f(x_k) - f'(x_k)(x - x_k)\Vert\leq \tfrac{\ell_1}{2} \Vert x-x_0\Vert^2 + \tfrac{\ell_1}{2} \Vert x-x_k\Vert^2 \leq\ell_1 \bar{t}^2<b-\Vert y_0\Vert
\end{eqnarray*}
and
\begin{eqnarray*}
&&\Vert f(x_0) + f'(x_0)(x - x_0) - f(x_{k}) - f'(x_{k})(x - x_{k})-\tfrac{1}{2} f''(x_{k})(u_{k+1}-x_k)(x-x_{k})\Vert\\
&&\Vert f(x)-f(x_0) - f'(x_0)(x - x_0)\Vert
 + \Vert f(x)- f(x_k) - f'(x_k)(x - x_k)\Vert+\tfrac{\ell_1}{2} \Vert u_{k+1}-x_k\Vert \Vert x-x_{k}\Vert\\
  &\leq& \tfrac{\ell_1}{2} \Vert x-x_0\Vert^2 + \tfrac{\ell_1}{2} \Vert x-x_k\Vert^2 +\tfrac{\ell_1}{2} \Vert u_{k+1}-x_k\Vert \Vert x-x_{k}\Vert\leq\tfrac{3\ell_1}{2} \bar{t}^2<b-\Vert y_0\Vert.
\end{eqnarray*}

Define a set-valued mapping $\Phi_k^1: X\tto X$ given by
$$
\Phi_k^1(x) := G^{-1}\big(f(x_0) + f'(x_0)(x - x_0) - f(x_k) - f'(x_k)(x - x_k)\big)\quad\text{for}\quad x\in \ball_{X}[x_k,s_{k+1}-t_k].
$$

As the first step, applying Theorem \ref{thmFixedPoint}, with $\Phi:=\Phi_k^1, \bx:=x_k$ and $a:=s_{k+1} - t_k$, we demonstrate that the mapping $\Phi_k^1$ admits a fixed point $u_{k+1}$ in $\ball_{X}[x_k, s_{k+1} - t_k]$. This fixed point satisfies the first inclusion in \eqref{eqIterationHalley}. Then
\begin{eqnarray*}
	&&\dist(x_k,\Phi_k^1(x_k))=\dist(x_k, G^{-1}(f(x_0) + f'(x_0)(x - x_0) - f(x_k) ))\\
	&\leq& \kappa\dist(f(x_0) + f'(x_0)(x_k - x_0) - f(x_k) , G(x_k))\\
	&=&  \kappa\dist(0 ,f(x_k) +F(x_k))\leq\kappa(\tfrac{\ell_1}{2}\Vert u_k-x_{k-1}\Vert\Vert x_k-x_{k-1}\Vert+\tfrac{\ell_1}{2}\Vert x_k-x_{k-1}\Vert^2+\tfrac{ \ell_2}{6}\Vert x_k-x_{k-1}\Vert^3)\\
	&\leq &\tfrac{\kappa\ell_1}{2}(s_k-t_k)(t_k-t_{k-1})+\tfrac{\kappa\ell_1}{2}(t_k-t_{k-1})^2+\tfrac{\kappa \ell_2}{6}(t_k-t_{k-1})^3\leq h(t_k)
	=\tfrac{(1-\kappa \ell_1 t_k) h(t_k)}{1-\kappa \ell_1 t_k}\\ &\leq& - (1-\kappa \ell_1 t_k)\tfrac{ h(t_k)}{h'(t_k)}= (1-\kappa \ell_1 t_k)(s_{k+1}-t_k).
\end{eqnarray*}
Fix any $x, v \in\ball_X [x_k, s_{k+1} - t_k]$ and any $z \in \Phi_k^1(x) \cap \ball_X [x_k, s_{k+1} - t_k]$. Then
$$
f(x_0) + f'(x_0)(x - x_0) - f(x_{k}) - f'(x_{k})(x -x_{k})\in G(z)
$$
and so
\begin{eqnarray*}
	\dist(z, \Phi_k^1(v))&=&\dist(z, G^{-1}\left(f(x_0) + f'(x_0)(v - x_0) - f(x_{k}) - f'(x_{k})(v - x_{k})
	\right))\\	
	&\leq& \kappa \, \dist(f(x_0) + f'(x_0)(v - x_0) - f(x_{k}) - f'(x_{k})(v - x_{k}), G(z))\\
	&\leq& \kappa \Vert f'(x_0)(x-v)-f'(x_k)(x-v)\Vert
	\leq \kappa \ell_1 \|x_k-x_0\| \|x - v\|\\
	&\leq& \kappa \ell_1 t_k\|x - v\|.
\end{eqnarray*}
Since, by Lemma \ref{lemSequences}{\rm (ii)}, $0<\kappa \ell_1 t_k\leq \kappa \ell_1 t_k+\tfrac{\kappa \ell_2}{2}t_k^2=h'(t_k)+1 <1$,  there is $u_{k+1}$ satisfying the first inclusion in \eqref{eqIterationHalley} such that $\Vert u_{k+1}-x_k\Vert \leq s_{k+1}-t_k.$

Further, define a set-valued mapping $\Phi_k^2: X\tto X$ given by
$$
\Phi_k^2(x) := G^{-1}\big(f(x_0) + f'(x_0)(x - x_0) - f(x_{k}) - f'(x_{k})(x - x_{k})-\tfrac{1}{2} f''(x_{k})(u_{k+1}-x_k)(x-x_{k})\big)
$$
for $x\in\ball_{X}[x_k,t_{k+1}-t_k]$. 
As the second step, applying Theorem \ref{thmFixedPoint}, with $\Phi:=\Phi_k^2, \bx:=x_k$ and $a:=t_{k+1} - t_k$, we demonstrate that the mapping $\Phi_k^2$ admits a fixed point $x_{k+1}$ in $\ball_{X}[x_k, t_{k+1} - t_k]$. This fixed point satisfies the second inclusion in \eqref{eqIterationHalley}.

 Further,
\begin{eqnarray*}
	&&\dist(x_k,\Phi_k^2(x_k))=\dist(x_k, G^{-1}(f(x_0) + f'(x_0)(x - x_0) - f(x_k) ))\\
	&\leq& \kappa\dist(f(x_0) + f'(x_0)(x_k - x_0) - f(x_k) , G(x_k))\\
	&=&  \kappa\dist(0 ,f(x_k) +F(x_k))\leq\kappa(\tfrac{\ell_1}{2}\Vert u_k-x_{k-1}\Vert\Vert x_k-x_{k-1}\Vert+\tfrac{\ell_1}{2}\Vert x_k-x_{k-1}\Vert^2+\tfrac{ \ell_2}{6}\Vert x_k-x_{k-1}\Vert^3)\\
	&\leq &\tfrac{\kappa \ell_1}{2}(s_k-t_k)(t_k-t_{k-1})+\tfrac{\kappa \ell_1}{2}(t_k-t_{k-1})^2+\tfrac{ \kappa\ell_2}{6}(t_k-t_{k-1})^3\leq h(t_k)\\
		&=&\tfrac{(1-\kappa \ell_1 t_k-\tfrac{\kappa \ell_1}{2} (s_{k+1}-t_k)) h(t_k)}{1-\kappa \ell_1 t_k-\tfrac{\kappa \ell_1}{2} (s_{k+1}-t_k)}\leq- (1-\kappa \ell_1 t_k-\tfrac{\kappa \ell_1}{2} (s_{k+1}-t_k))\tfrac{ h(t_k)}{h'(t_k)+\tfrac{1}{2}h''(t_k)(s_{k+1}-t_k)}\\
		&=& (1-\kappa \ell_1 t_k-\tfrac{\kappa \ell_1}{2} (s_{k+1}-t_k))(t_{k+1}-t_k).
\end{eqnarray*}
Fix any $x, v \in\ball_X [x_k, t_{k+1} - t_k]$ and any $z \in \Phi_k^2(x) \cap \ball_X [x_k, t_{k+1} - t_k]$. Then
$$
f(x_0) + f'(x_0)(u - x_0) - f(x_{k}) - f'(x_{k})(x -x_{k})-\tfrac{1}{2}f''(x_k)(u_{k+1}-x_k)(x-x_k)\in G(z)
$$
and so
\begin{eqnarray*}
	&& \dist(z, G^{-1}\left(f(x_0) + f'(x_0)(v - x_0) - f(x_{k}) - f'(x_{k})(v - x_{k})-\tfrac{1}{2} f''(x_{k})(u_{k+1}-x_k)(v-x_{k})
	\right))\\	
	&\leq& \kappa \, \dist(f(x_0) + f'(x_0)(v - x_0) - f(x_{k}) - f'(x_{k})(v - x_{k})-\tfrac{1}{2} f''(x_{k})(u_{k+1}-x_k)(v-x_{k}), G(z))\\
	&\leq& \kappa \Vert f'(x_0)(x-v)-f'(x_k)(x-v)-\tfrac{1}{2} f''(x_{k})(u_{k+1}-x_k)(v-x)\Vert\\
	&\leq& (\kappa \ell_1  \|x_{k}-x_{0}\|+\tfrac{\kappa\ell_1}{2}\Vert u_{k+1}-x_k\Vert) \|u - v\|\leq  (\kappa \ell_1  t_k+\tfrac{\kappa\ell_1}{2}(s_{k+1}-t_k)) \|x - v\|.
\end{eqnarray*}
Since, by Lemma \ref{lemSequences}{\rm (ii)}, $\kappa \ell_1  t_k+\tfrac{\kappa\ell_1}{2} (s_{k+1}-t_k)<\kappa\ell_1 s_{k+1}<h'(s_{k+1})+1<1$, there exists $x_{k+1} \in X$ satisfying the second inclusion in \eqref{eqIterationHalley} such that $\Vert x_{k+1} - x_k \Vert \leq t_{k+1} - t_k$. Hence, we have shown that there are sequences $(u_k)$ and $(x_k)$ generated by \eqref{eqIterationHalley} such that \eqref{eqEstimated} holds for each $k \in \mathbb{N}_0$.

Since the sequence $(t_k)$ is convergent, the sequence $(x_k)$ is Cauchy. Therefore, there exists $\bar{x} \in X$ such that $(x_k)$ converges to $\bar{x}$. Then,  for each $k \in \mathbb{N}_0$, we have 
\begin{eqnarray*}
	 \Vert \bar{x} - x_k \Vert &=& \lim_{m \to \infty} \Vert x_m - x_k \Vert \\
	  &\leq& \lim_{m \to \infty} \left(\Vert x_{k+m} - x_{k+m-1} \Vert + \Vert x_{k+m-1} - x_{k+m-2} \Vert + \dots + \Vert x_{k+1} - x_k \Vert \right) \\
	   &=& \lim_{m \to \infty} (t_{k+m} - t_k) \leq \bar{t} - t_k \leq M \alpha^{3^k}.
\end{eqnarray*} Setting $k := 0$, we obtain $\Vert \bar{x} - x_0 \Vert \leq \bar{t}$. Therefore $\bar{x} \in \ball_{X}[x_0, \bar{t}]$. Since \( F \) has a closed graph and, for each \( k \in \mathbb{N}_0 \), we have
\[
\dist(0, f(x_k) + F(x_k)) \leq \left( 1 - \kappa \ell_1 t_k - \tfrac{\kappa \ell_1}{2}(s_{k+1} - t_k) \right)(t_{k+1} - t_k) \leq \bar{t} - t_k \leq M \alpha^{3^k},
\]
it follows that \( 0 \in f(\bar{x}) + F(\bar{x}) \).
\end{proof}
In the particular case where the set-valued mapping is trivial, i.e., $F(x)=\{0\}$, the analysis simplifies and we can derive explicit convergence results for the iterative scheme \eqref{eqIterationHalleySingle1}. The following proposition extends the Halley method to problems in which the linear mapping $f'(x_0)$ is not invertible but merely surjective.

\begin{proposition} Let $(X,\Vert \cdot\Vert)$ and $(Y,\Vert \cdot\Vert)$ be Banach spaces. Consider a twice continuously differentiable mapping $f:X\longrightarrow Y$.	Let the positive scalars $a $, $b $, $\kappa$, $\ell_1, \ell_2,$ and $\eta$ and let a number $\bar{t}$ be the smallest positive root of function $h$ given by \eqref{eqFunctionH}. Suppose that the point $x_0 \in X$ be such that
	\begin{itemize}
		\item[\rm (i)] $
		\kappa \Vert f(x_0)\| < \eta<\dfrac{2(\kappa \ell_1+2\sqrt{\kappa^2 \ell_1^2+2\kappa \ell_2})}{3(\kappa \ell_1+\sqrt{\kappa \ell_1+2\kappa \ell_2})^2}$\qquad and $\qquad\bar{t}<a$;
		
		\item[\rm (ii)] the mapping $X\ni x\longmapsto f'(x_0)x$ is metrically regular with the constant $\kappa$, i.e., for  each $y\in Y$ there is $x \in X$ such that $f'(x_0)x=y$ and
		\begin{eqnarray*}
			\Vert x\Vert\leq \kappa \Vert y\Vert;
		\end{eqnarray*}
		\item[\rm (iii)] 
	$\Vert f''(x)\Vert \leq \ell_1$ and	$\Vert f''(x)-f''(u)\Vert\leq \ell_2\Vert x-u\Vert \quad\text{for each}\quad x,u\in \ball_X[x_0,a]$.
	\end{itemize}
	
	Then  there exists a sequence $(x_k)$ generated by the iteration \eqref{eqIterationHalleySingle1} with the starting point $x_0$ which remains in $ \ball_X[x_0,\bar{t}]$ and converges to a solution $\bar{x} \in  \ball_X[x_0,\bar{t}]$ of \eqref{eqGeneralizedEquation}; moreover, the convergence rate is $R$-cubic, i.e, there are $\alpha\in (0,1)$ and $M>0$ such that
	$$
	\Vert x_k - \bar{x}\Vert \leq M \alpha^{3^k}\quad\text{and}\quad\Vert f(x_k)\Vert  \leq M \alpha^{3^k} \quad \text{for every } k \in \mathbb{N}_0.
	$$
\end{proposition}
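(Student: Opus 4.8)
The plan is to obtain the statement as a direct specialization of Theorem~\ref{thmKantorovich} to the trivial set-valued part $F(x):=\{0\}$, in the same spirit in which Proposition~\ref{thmHalleyLocal2} was derived from Theorem~\ref{thmHalleyLocal}. First I would record the elementary reductions: with $F\equiv\{0\}$ the two inclusions in \eqref{eqIterationHalley} become equalities and coincide exactly with the Halley scheme \eqref{eqIterationHalleySingle1}; the generalized equation \eqref{eqGeneralizedEquation} reduces to $f(x)=0$; and the only admissible choice of $y_0\in f(x_0)+F(x_0)$ is $y_0:=f(x_0)$. Hypothesis~(i) of the present proposition then yields $\kappa\Vert y_0\Vert=\kappa\Vert f(x_0)\Vert<\eta$ with $\eta$ below the threshold of Lemma~\ref{lemSequences}, so $h$ in \eqref{eqFunctionH} has a smallest positive root $\bar t$ with $\bar t<a$; this matches the first and third parts of hypothesis~(i) of Theorem~\ref{thmKantorovich}, and hypothesis~(iii) there is literally hypothesis~(iii) here.

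The only step needing an argument is hypothesis~(ii) of Theorem~\ref{thmKantorovich}: metric regularity of the affine mapping $G(x):=f(x_0)+f'(x_0)(x-x_0)$ around $(x_0,y_0)$. I would show it follows from the pointwise surjectivity-with-constant of $x\longmapsto f'(x_0)x$ assumed in hypothesis~(ii) of the proposition, and in fact holds on neighbourhoods of arbitrary radius. Indeed, for $x\in X$ and $y\in Y$ set $w:=y-G(x)$, choose $v\in X$ with $f'(x_0)v=w$ and $\Vert v\Vert\le\kappa\Vert w\Vert$; then $x+v\in G^{-1}(y)$, and since $G$ is single valued, $\dist(x,G^{-1}(y))\le\Vert v\Vert\le\kappa\Vert w\Vert=\kappa\,\dist(y,G(x))$. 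Since $G$ is continuous and $\{0\}$ has closed graph, the graph of $f(x_0)+f'(x_0)(\cdot-x_0)+F$ is closed, and because this estimate is valid on all of $X\times Y$ we may choose the (otherwise unconstrained) positive constant $b$ large enough that the remaining inequality $\tfrac{3\ell_1}{2}\bar t^2+\Vert y_0\Vert<b$ of hypothesis~(i) of Theorem~\ref{thmKantorovich} also holds.

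With all hypotheses of Theorem~\ref{thmKantorovich} verified, I would simply invoke it: it furnishes a sequence $(x_k)$ generated by \eqref{eqIterationHalley}---equivalently by \eqref{eqIterationHalleySingle1}---remaining in $\ball_X[x_0,\bar t]$ and converging $R$-cubically to some $\bar x\in\ball_X[x_0,\bar t]$ with $0\in f(\bar x)+F(\bar x)$, i.e.\ $f(\bar x)=0$, together with $\alpha\in(0,1)$ and $M>0$ such that $\Vert x_k-\bar x\Vert\le M\alpha^{3^k}$ and $\dist(0,f(x_k)+F(x_k))\le M\alpha^{3^k}$ for every $k\in\mathbb{N}_0$. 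Since $\dist(0,f(x_k)+F(x_k))=\dist(0,\{f(x_k)\})=\Vert f(x_k)\Vert$, the second estimate becomes $\Vert f(x_k)\Vert\le M\alpha^{3^k}$, which is precisely the asserted bound, and the proof is complete.

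The argument is essentially bookkeeping once the reduction $F\equiv\{0\}$ is fixed; the one genuinely non-automatic point is the observation that the ``linear openness'' condition~(ii) of the proposition is exactly the right form of metric regularity for the affine mapping $G$ and that it is available on balls of every radius, which is what lets us satisfy the auxiliary requirement on $b$ coming from Theorem~\ref{thmKantorovich}.
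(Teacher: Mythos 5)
Your proposal is correct and follows essentially the same route as the paper, which likewise derives the proposition from Theorem~\ref{thmKantorovich} by taking $F:=\{0\}$ and choosing an arbitrary $b>0$ with $\tfrac{3\ell_1}{2}\bar t^2+\Vert f(x_0)\Vert<b$. Your explicit verification that the surjectivity-with-constant hypothesis yields metric regularity of the affine mapping $G$ on balls of every radius is a detail the paper leaves implicit, but it is exactly the right justification.
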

\begin{proof}
    The statement follows from Theorem \ref{thmKantorovich}, with $F:= \lbrace 0\rbrace$, and arbitrary $b>0$ such that $\tfrac{3\ell_1\bar{t}^2}{2}+\Vert f(x_0)\Vert<b$.
\end{proof}

\section{Numerical experiments}
This section focuses on applying the iterative method described in  \eqref{eqIterationHalley} to various theoretical and practical problems. The analysis begins with a detailed exploration of its behavior in fundamental problems in mathematical theory, illustrating its convergence properties and effectiveness. 
By examining these use cases, we aim to demonstrate the robustness of the iteration scheme. The results presented herein validate the theoretical underpinnings of the iteration. Considered models are in the form \eqref{eqGeneralizedEquation}, which covers several problems arising in theory and engineering.   
We observe a sequence of absolute errors of iterations
$$e_{k} = x_{k}-\bx, \quad k\in \mathbb{N}_0 $$ and estimate the rate $r$ and the constant $L$ of 
\begin{eqnarray}
	\Vert e_{k+1} \Vert \approx L \Vert e_k \Vert ^r\quad\text{for each}\quad k\in \mathbb{N}_0.
\label{cubic_convergence_numerical}
\end{eqnarray}
According to the theoretical estimate \eqref{cubic_convergence} with $q:=1$, we should see $ r \approx 3$ 
for large values of $k$ to confirm the cubic rate of convergence. Practically, we can calculate approximations of $r, L$ from formulas
\begin{eqnarray}
r_{k+2} \approx 
\frac{
\log{\Vert e_{k+2}} \Vert - \log{\Vert e_{k+1} \Vert }
} 
{\!\!\!\!\!
\log{\Vert e_{k+1}} \Vert - \log{\Vert e_{k} \Vert }
}, 
\quad  
L_{k+2} \approx 
\frac{\,\,\,\!\!\!\!\!\!\!\!\!\!
\Vert e_{k+2} \Vert } 
{
{\,\,\,\Vert e_{k+1} \Vert }^{r_{k+2} }
}
\quad\text{for each}\quad k\in \mathbb{N}_0
\end{eqnarray}
that can be derived directly from \eqref{cubic_convergence_numerical}. 

Computations in MATLAB were enhanced using the Variable Precision Arithmetic (VPA) library to ensure high numerical precision. Specifically, we used the option "digits(400)" to perform calculations with up to 400 significant digits. This level of precision is essential for accurately observing cubic convergence, as the standard double-precision limit of 16 significant digits would be quickly exhausted during the iterative process. Furthermore, we had to develop our own MATLAB libraries to compute solutions for partially linearized generalized equations, since the built-in libraries do not support VPA. 

To better reading, we define two set-valued mappings $F_1:\mathbb{R}\tto\mathbb{R}$ and $F_2:\mathbb{R}\tto\mathbb{R}$ given by
\begin{eqnarray*}
    F_1(x):=\begin{cases}
    [0,\infty)&\quad\text{for}\quad x=0,\\
    0,&\quad\text{for}\quad x>0,\\
    \emptyset, &\quad\text{for}\quad x<0,
    \end{cases}\qquad\text{and}\qquad
      F_2(x):=\begin{cases}
    -1,&\quad\text{for}\quad x<0,\\
    [-1,1],& \quad\text{for}\quad x=0,\\
    1,& \quad\text{for}\quad x>0.
    \end{cases}
\end{eqnarray*}
The set-valued mapping \(F_1\) is the normal-cone mapping associated with the set \([0,\infty)\), and the set-valued mapping \(F_2\) is the Clarke subdifferential of the absolute-value function.

The following examples include the maximal monotone set-valued part (so is $F_1$ and $F_2$) and a strongly monotone and the infinitely continuously differentiable single-valued part; hence, the assumptions of Theorem \ref{thmHalleyLocal} are satisfied near the solution, see \cite[12.54 Proposition]{RW1998}.

\begin{example1}[one variable]\label{Ex1} 
 Consider a problem to find $x\in \mathbb{R}$ such that \eqref{eqGeneralizedEquation} holds,	where $f:\mathbb{R}\longrightarrow\mathbb{R}$ and $F:\mathbb{R}\tto \mathbb{R}$ are given. Specifically, we analyse the following two cases:
	\begin{enumerate}
		\item[\rm (i)] $f(x):=\sinh(x)-\frac{3}{8}$ and $F(x):=F_1(x)$ for $x\in\mathbb{R}$;
		\item[\rm (ii)] $f(x):=\sinh(x)+10$ and $F(x):=F_2(x)$ for $x\in\mathbb{R}$,
	\end{enumerate}
with known exact solutions 
\begin{enumerate}
		\item[\rm (i)] $\bar x=\arcsinh(3/8) \approx 0.3667246042301368$; 
		\item[\rm (ii)] $\bar x=-\arcsinh(9) \approx -2.893443985885871$.
	\end{enumerate}
 The cubic convergence is demonstrated in both cases in Tables \ref{tab:overall_example1i} and \ref{tab:overall_example1ii}.

\begin{table}[h!]
		\centering
		\begin{tabular}{c|c|c|c|c}
			$k$ & $x_k$ & $e_k$ & $r_k$ & $L_k$ \\ \hline
			 0 &  6.000000 &  5.63e+00 &  - &  -  \\ 
 1 &  4.007496 &  3.64e+00 &   - &  -  \\ 
 2 &  2.065099 &  1.70e+00 &  1.746923 &  0.177694  \\ 
 3 &  0.547668 &  1.81e-01 &  2.936618 &  0.038196  \\ 
 4 &  0.366054 &  6.71e-04 &  2.499630 &  0.048144  \\ 
 5 &  0.366725 &  4.11e-11 &  2.967402 &  0.107114  \\ 
 6 &  0.366725 &  9.40e-33 &  3.000034 &  0.135956  \\ 
 7 &  0.366725 &  1.13e-97 &  3.000000 &  0.135845  \\ 
 8 &  0.366725 &  1.95e-292 &  3.000000 &  0.135845  \\ 
		\end{tabular}
	\caption{Calculations (i) of Example \ref{Ex1}.}
	\label{tab:overall_example1i}
    \vspace{0.5cm}
		\centering
		\begin{tabular}{c|c|c|c|c}
			$k$ & $x_k$ & $e_k$ & $r_k$ & $L_k$ \\ \hline
			0 &  -10.000000 &  7.11e+00 &  - &  -  \\ 
 1 &  -8.003266 &  5.11e+00 &  - &  -  \\ 
 2 &  -6.027198 &  3.13e+00 &  1.482275 &  0.279263  \\ 
 3 &  -4.193713 &  1.30e+00 &  1.799143 &  0.166549  \\ 
 4 &  -3.049397 &  1.56e-01 &  2.410897 &  0.082808  \\ 
 5 &  -2.893750 &  3.06e-04 &  2.939091 &  0.072084  \\ 
 6 &  -2.893444 &  2.30e-12 &  3.000870 &  0.080853  \\ 
 7 &  -2.893444 &  9.83e-37 &  3.000001 &  0.080287  \\ 
 8 &  -2.893444 &  7.62e-110 &  3.000000 &  0.080285  \\
		\end{tabular}
	\caption{Calculations (ii) of Example \ref{Ex1}.}
	\label{tab:overall_example1ii}
\end{table}

\end{example1}

\begin{example1}[two variables]\label{Ex2} We define 
$f:\mathbb{R}^2\longrightarrow \mathbb{R}^2$ by
	\begin{eqnarray}
		\label{eqEx2}
		f(x_1,x_2):=(e^{x_1-x_2-p}-q_1, e^{x_1+x_2-p}-q_2),
	\end{eqnarray}
with parameters $p \in \mathbb{R}, q_1, q_2 \in \mathbb{R}^+$. The problem is to find $ (x_1,x_2)\in \mathbb{R}^2$ such that
\begin{eqnarray*}
    f(x_1,x_2)=0,
\end{eqnarray*}
has a known exact solution given by
\begin{eqnarray}
\label{eqSoltutionExample52}
(x_1,x_2)=
\left(\frac{1}{2}\log{(q_1 q_2)}+p, \frac{1}{2}\log{(q_2/q_1)} \right).
\end{eqnarray}
This setup is inspired by \cite{CR1985}, in which  
$f(x_1,x_2):=(e^{-x_1-x_2}-0.1, e^{-x_1+x_2}-0.1)$. Our modification ensures that the derivative of $f$  at each point is $p$-matrix (all main minors are positive); then it is guaranteed that $f$ is strongly monotone.

We consider two cases of the multifunction $F:\mathbb{R}^2\tto\mathbb{R}^2$:
\begin{itemize}
\item[\rm (i)] $F(x_1,x_2):=\lbrace (0,0)\rbrace;$
\item[\rm (ii)] $F(x_1,x_2):=F_2(x_1)\times F_2(x_2).$
\end{itemize}
In our simulations, we consider parameters $p=3, q_1=0.1,$ and $q_2=0.2$ for the case {\rm (i)} and $p=0, q_1=2.3,$ and $q_2=1$ for the case {\rm (ii)}

The cubic convergence of the Josephy-Halley method in case (i)
is demonstrated in Table \ref{tab:overall_example2}. It needs six iterations to converge to the exact solution 
$$(x_1,x_2) \approx (1.043988497285927, 0.346573590279973)$$
provided by the formula \eqref{eqSoltutionExample52}. The Josephy-Newton method requires eleven iterations to converge to the exact solution, see Figure \ref{fig:comparison_example2}. The total evaluation time of both methods is comparable due to the higher cost of the Halley method. 
\begin{table}[h!]
\centering
\begin{tabular}{c|c|c|c|c|c}
			$k$ & $x_k^1$ & $x_k^2$  & $e_k$ & $r_k$ & $L_k$ \\ \hline
 0 &  1.000000 &  -1.000000 &  1.35e+00 &  0.000000 &  0.000000  \\ 
 1 &  1.028824 &  0.173903 &  1.73e-01 &  0.000000 &  0.000000  \\ 
 2 &  1.043876 &  0.346136 &  4.52e-04 &  2.901361 &  0.072991  \\ 
 3 &  1.043988 &  0.346574 &  1.00e-11 &  2.962158 &  0.081198  \\ 
 4 &  1.043988 &  0.346574 &  1.58e-34 &  2.979257 &  0.092628  \\ 
 5 &  1.043988 &  0.346574 &  6.56e-103 &  2.998820 &  0.152023  \\ 
 6 &  1.043988 &  0.346574 &  4.70e-308 &  2.999999 &  0.166638  \\ 
 		\end{tabular}
	\caption{Calculations (i) of Example \ref{Ex2}.}
	\label{tab:overall_example2}
\end{table}

\begin{figure}[H]
\centering
\includegraphics[width=0.55\textwidth]{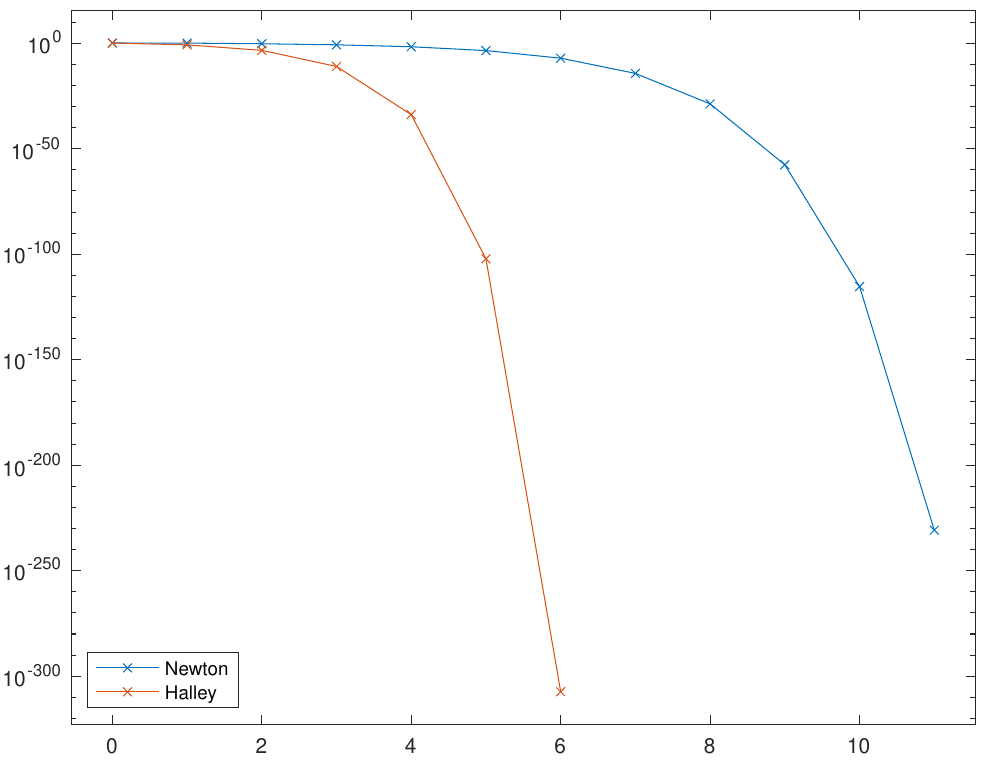}
\caption{Halley vs. Newton method in case {\rm (i)} of Example \ref{Ex2}.  Comparison of convergence of $e_k$ versus the $k$-th iteration.  }
	\label{fig:comparison_example2}
\vspace{0.5cm}
\end{figure}


To compare the total computational cost of the (Josephy–)Halley and (Josephy–)Newton methods, we apply both algorithms to a set of initial guesses
$x_0 \in \mathbb{R}^2$
sampled on a uniform grid over \([-4,4]\times[-4,4]\), as shown in Figure~\ref{fig:comparison_example2_grid} and Figure~\ref{fig:comparison_example2_grid2}. The left panel classifies each grid point into three cases:
\begin{enumerate}
  \item[Case 0] both methods converge, with (Josephy–)Newton achieving a lower total cost (dark blue);
  \item[Case 1] both methods converge, with (Josephy–)Halley achieving a lower total cost (teal);
  \item[Case 2] (Josephy–)Halley converges within the 200-iteration cap, whereas Josephy–Newton fails to converge (reaching the 200-iteration limit) (yellow).
\end{enumerate}
The middle panel shows number of iterations required by the (Josephy–)Newton method (capped at its 200-iteration limit), and the right panel shows number of iterations required by the (Josephy–)Halley method.

Let us emphasize that, to guarantee a fair comparison of the two schemes, we solve both the inclusion in \eqref{eqIterationJosephyNewton} and the pair of inclusions in \eqref{eqIterationHalley} by calling the exact same numerical subroutine. Specifically, each inclusion is passed to a robust interval‐Newton–like solver which uses identical stopping criteria ($1e-300$).

By holding the implementation details, convergence thresholds, and underlying arithmetic operations constant, any observed differences in runtime or iteration counts can be attributed solely to the structural distinctions between the Josephy–Newton and Josephy–Halley updates, rather than to variations in the solution machinery itself.

\begin{figure}[H]
\includegraphics[width=1\textwidth]{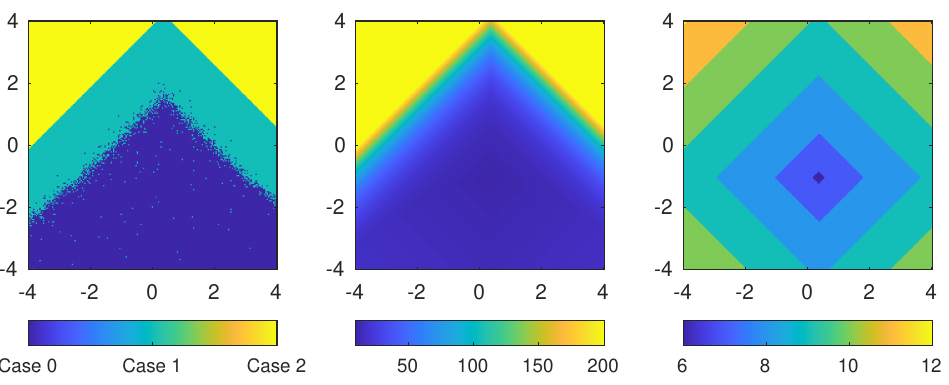}
\caption{Comparison of convergence: Halley vs. Newton of Example \ref{Ex2} (i).}
	\label{fig:comparison_example2_grid}
\end{figure}

\begin{figure}[H]
\includegraphics[width=1\textwidth]{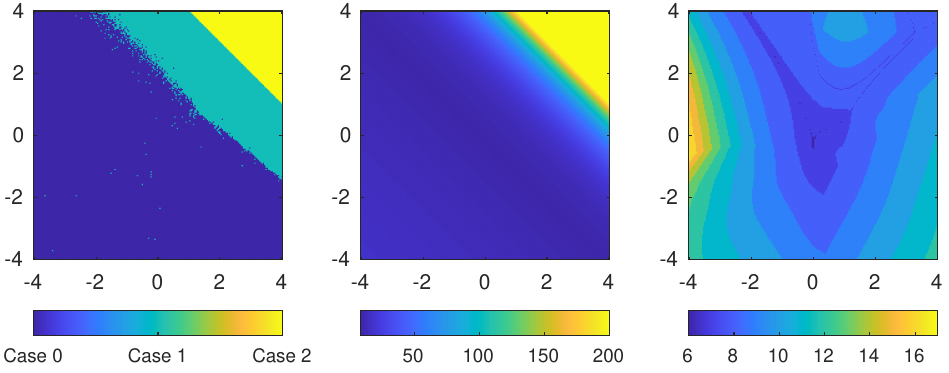}
\caption{Comparison of convergence: Josephy-Halley vs. Josephy-Newton of Example \ref{Ex2} (ii).}
	\label{fig:comparison_example2_grid2}
\end{figure}
\end{example1}

\section{Conclusion}
We have presented a novel Josephy–Halley iteration for generalized equations that achieves cubic convergence under mild smoothness and metric regularity assumptions.  Our local analysis (Theorem~3.1) shows that, whenever the single-valued part has a Hölder-continuous second derivative and the linearization is metrically regular, the method converges with order \(2+p\).  The semilocal Kantorovich-style result (Theorem~4.1) provides explicit, computable bounds on the convergence region and error estimates via a majorant function.  Implementation in MATLAB (with variable-precision arithmetic) on benchmark problems demonstrates the practical advantages of incorporating second-order information: the Josephy–Halley scheme requires significantly fewer iterations than the Josephy–Newton predictor, while retaining comparable overall cost.  Future work may explore extensions to nonsmooth single-valued mappings, inexact subproblem solvers, and applications to large-scale variational inequalities arising in optimization and equilibrium modeling.



\bibliographystyle{acm}

\end{document}